\newcommand{\Ds}{\displaystyle}
\newcommand{\Norm}[1]{{\left\|{#1}\right\|}}
\newcommand{\PP}{{\mathcal P}}
\newcommand{\Pn}{{\mathcal P}_n}
\newcommand{\PK}{{\mathcal P}_n(K)}
\newcommand{\ze}{\zeta}
\newcommand{\DK}{\partial K}
\newcommand{\HH}{{\mathcal G}}
\newcommand{\HP}{{\mathbb H}}
\newcommand{\Z}{{\mathcal Z}}
\newcommand{\W}{{\mathcal W}}
\newcommand{\RR}{{\mathbb R}}
\newcommand{\CC}{{\mathbb C}}
\newcommand{\NN}{{\mathbb N}}
\newcommand{\TT}{{\mathbb T}}
\newcommand{\DD}{{\mathbb D}}
\newcommand{\II}{{\mathbb I}}
\newcommand{\diam}{{\rm diam\,}}
\newcommand{\width}{{\rm width\,}}
\newtheorem{theorem}{Theorem}
\newtheorem{lemma}{Lemma}
\newtheorem{conjecture}{Conjecture}
\theoremstyle{definition}
\newtheorem{remark}{Remark}
\newcommand{\de}{\delta}
\newcommand{\ff}{\varphi}
\newcounter{othm}
\def\theothm{\Alph{othm}}
\newenvironment{othm}{
  \em
  \vskip 0.10in
  \refstepcounter{othm}
  \noindent{\bf Theorem\ \theothm .}
}{\vskip 0.10in}
\newcounter{rev}
\newcounter{rep}
\definecolor{darkgreen}{rgb}{0,0.6,0}
 \let\le\leqslant \let\leq\le
 \let\ge\geqslant \let\geq\ge
\reversemarginpar \setlength{\marginparwidth}{2.5in}
\begin{document}

\title[$L^q$ Tur\'an inequalities on convex polygons]{Tur\'an type oscillation inequalities in $L^q$ norm\\ on the boundary of convex polygonal domains}

\author{Polina Glazyrina, Szil\'ard Gy. R\'ev\'esz}

\address
{Polina Yu. Glazyrina \newline  \indent Institute of Natural Sciences and Mathematics, \newline  \indent   Ekaterinburg, Mira str. 19,\newline  \indent 620002 RUSSIA \newline  \indent  and \newline  \indent
Institute of Mathematics and Mechanics, \newline  \indent Ural Branch of the Russian Academy of Sciences, \newline  \indent
Ekaterinburg,   S.\,Kovalevskaya str. 16,
\newline  \indent 620077 RUSSIA}
\email{polina.glazyrina@urfu.ru}

\address{Szil\'ard Gy. R\'ev\'esz
\newline  \indent HUN-REN A. R\'enyi Institute of Mathematics \newline \indent Budapest, Re\'altanoda utca 13-15. \newline \indent 1053 HUNGARY} \email{revesz.szilard@renyi.hu}

\date{\today}

\begin{abstract} In 1939 P\'al Tur\'an and J\'anos Er\H{o}d initiated the study of lower estimations of maximum norms of derivatives of polynomials, in terms of the maximum norms of the polynomials themselves, on convex domains of the complex plane. As a matter of normalization they considered the family $\PK$ of degree $n$ polynomials with all zeros lying in the given convex, compact subset $K\Subset \CC$. While Tur\'an obtained the first results for the interval $\II:=[-1,1]$ and the disk $\DD:=\{ z\in \CC~:~ |z|\le 1\}$, Er\H{o}d extended investigations to other compact convex domains, too. The order of the optimal constant was found to be $\sqrt{n}$ for $\II$ and $n$ for $\DD$.
It took until 2006 to clarify that all compact convex \emph{domains} (with nonempty interior), follow the pattern of the disk, and admit an order $n$ inequality.

For $L^q(\partial K)$ norms with any $1\le  q <\infty$ we obtained order $n$ results for various classes of domains. Further, in the generality of all convex, compact domains we could show a $c n/\log n$ lower bound together with an $O(n)$ upper bound for the optimal constant. Also, we conjectured that all compact convex domains admit an order $n$ Tur\'an type inequality. Here we prove this for all \emph{polygonal} convex domains and any $0<  q <\infty$.
\end{abstract}

\maketitle

\let\oldfootnote\thefootnote
\def\thefootnote{}
\footnotetext{P. Yu. Glazyrina was supported by the Ministry of Science and Higher Education of the Russian Federation (Ural Federal University Program of Development within the Priority-2030 Program).

Sz. Gy.~R\'{e}v\'{e}sz was supported by the Hungarian National Research Innovation and Development Office, projects No. K-146387, K-147153 and E-151341.
}
\let\thefootnote\oldfootnote

\bigskip
\bigskip

{\bf MSC 2020 Subject Classification.} Primary 41A17. Secondary 30E10, 52A10.

{\bf Keywords and phrases.} {\it Bernstein--Markov Inequalities, Tur\'an's lower estimate of derivative norm, logarithmic derivative, Chebyshev constant, transfinite diameter, convex domains, outer angle, width of a convex domain, depth of a convex domain.}

\tableofcontents


\section{Introduction}

\subsection{The oscillation of a polynomial in maximum norm}

At the turn of the 19th and 20th centuries, the first estimates of the derivative of a polynomial via the maximum of its
values  appeared.
They were obtain by A. Markov in 1889, for  algebraic polynomials on an interval,
and by Bernstein and M.~Riesz in 1914, for  trigonometric polynomials
on $[0,2\pi]$ and algebraic polynomials on the unit circle.
In 1923,  Szeg\H o  \cite{Szego23} obtained an estimate for a large class of (not necessarily convex, but piecewise smooth)
 domains. Namely, if $K\subset \CC$ is a piecewise smooth simply connected domain, with its boundary consisting of finitely many analytic
Jordan arcs, and if the maximum of the outer angles at the joining
 vertices of these arcs is\footnote{If the domain is bounded, then for all directions it has supporting lines,
whence there are points where the outer angle is at least $\pi$.} $\beta \in [\pi,2\pi]$, then the domain admits
a Markov type inequality of the form $\Norm{p'}_K\le c_K n^{\beta/\pi} \Norm{p}_K$
for any polynomial $p$ of degree $n$.
Here the norm $\Norm{\cdot}:=\Norm{\cdot}_K$ denotes sup norm over values attained on $K$.
 This inequality is essentially sharp for all such domains. In particular, this immediately implies that for \emph{analytically smooth}
convex domains the  Markov factor is $O(n)$.
For the unit disk
$
\DD:=\{z\in \CC~:~ |z|\le 1\}
$
even the exact inequality is well-known:
\begin{equation*}
\Norm{p'}_\DD\le n \Norm{p}_\DD.
\end{equation*}
This was conjectured, and almost proved, by Bernstein \cite{Bernstein, Bernstein_com}; for the first published proof see \cite{Riesz}.
Similarly, the precise result is also classical for the unit interval
$
\II:=[-1,1]
$
-- then we have Markov's inequality $\Norm{p'}_\II\le n^2 \Norm{p}_\II$, which is sharp\footnote{Note that in this case the outer angles at the break-points of the piecewise smooth boundary are exactly $2\pi$ at each end.}, see
\cite{Markov}.

In 1939 P\'al Tur\'an started to study converse inequalities of
the form $$\Norm{p'}_K\ge c_K n^A \Norm{p}_K.$$ Clearly such a
converse can only hold if further restrictions are imposed on
the occurring polynomials $p$. Tur\'an assumed that all zeroes
of the polynomials belong to $K$. So denote the set of complex
(algebraic) polynomials of degree (exactly) $n$ as $\PP_n$, and
the subset with all the $n$ (complex) roots in some set
$K\subset\CC$ by $\PK$.

\begin{othm}{\bf(Tur\'an, \cite[p. 90]{Tur}).}\label{oth:Turandisk} If $p\in \PP_n(\DD)$,  then we have
\begin{equation}\label{Turandisk}
\Norm{p'}_\DD\ge \frac n2 \Norm{p}_\DD~.
\end{equation}
\end{othm}

\begin{othm}{\bf(Tur\'an, \cite[p. 91]{Tur}).}\label{oth:Turanint} If $p\in\PP_n(\II)$, then we have
\begin{equation*}
\Norm{p'}_\II\ge \frac {\sqrt{n}}{6} \Norm{p}_\II~.
\end{equation*}
\end{othm}

Inequality \eqref{Turandisk} of Theorem \ref{oth:Turandisk} is best possible.
Regarding Theorem \ref{oth:Turanint}, Tur\'an pointed out by
example of $(1-x^2)^{n}$ that the $\sqrt{n}$ order is sharp.
Some slightly improved constants can be found in
\cite{BabenkoMN86} and \cite{LP}, however, the exact value of
the constants and the corresponding extremal polynomials were
already computed for all fixed $n$ by Er\H{o}d in \cite{Er}.

Now we are going to describe  Tur\'an-type
inequalities  for general convex sets. Denote by $\Gamma:=\partial K$ the boundary of
$K$.
The (normalized) quantity under our study
is the ``inverse Markov factor" or ``oscillation factor"
\begin{equation}\label{Mdef}
M_{n,q}(K):=\inf_{p\in \PK} M_q(p) \qquad \text{\rm with} \qquad
M_q(p):=\frac{\Norm{p'}_{L^q(\Gamma)}}{\Norm{p}_{L^q(\Gamma)}},
\end{equation}
where, as usual,
\begin{align*}
\Norm{p}_{q}:&=\Norm{p}_{L^q(\Gamma)}:=\left(\int_{\Gamma} |p(z)|^q|dz|\right)^{1/q} \quad (0<q<\infty), \notag
\\ \Norm{p}_{\infty}:&=\Norm{p}_{L^\infty(\Gamma)}:=\Norm{p}_K=\sup_{z\in \Gamma}|p(z)|=\sup_{z\in K}|p(z)|.
\end{align*}

Drawing from the work of Tur\'an, Er\H od \cite[p. 74]{Er} already addressed the question: ``For what kind of domains does the method of Tur\'an apply?'' Clearly, by ``applies'' he meant that it provides order $n$ oscillation for the derivative. Moreover, he introduced new ideas into the investigation -- including the application of Chebyshev's Inequality \eqref{transfdiam} below -- so clearly he did not simply pursue the effect of Tur\'an's original methods, but was indeed after the right oscillation order of general domains. In particular, he showed
on p. 77 of \cite{Er} the following.

\begin{othm}{\bf(Er\H od).}\label{oth:transfquarter} Let $K$ be any convex domain bounded by finitely many Jordan arcs, joining at vertices with angles $<\pi$, with all the arcs being $C^2$-smooth and being either straight lines of length $<\Delta(K)$, where $\Delta(K)$ stands for the transfinite diameter of $K$, or having positive curvature bounded away from $0$ by a fixed constant $\kappa>0$. Then there is a constant $c(K)>0$, such that $M_{n,\infty}(K)\geq c(K) n$ for all $n\in\NN$.
\end{othm}

Note that this latter result of Er\H od incorporates regular $k$-gons $G_k$ for large enough $k$, but not the square $Q=G_4$, because the side length $h$ of a square is larger than the quarter of the transfinite diameter $\Delta$:
actually, with $\Gamma$ denoting Euler's Gamma function and $h$ standing for the side length of $G_k$, we have
$$
\Delta(G_k)= \frac{\Gamma(1/k)}{\sqrt{\pi}2^{1+2/k}\Gamma(1/2+1/k)} h,
$$
(see e.g. \cite[p. 135]{Rans}), so $\Delta(G_k) > h$ iff $k\geq 7$. This implies
$M_n(G_k) \ge c_k n$ for $k\ge 7$.

In \cite{E}, Erd\'elyi proved order $n$ oscillation for the square\footnote{Erd\'elyi also proves similar results on rhombuses, under the further condition of some symmetry of the polynomials in consideration -- e.g. if the polynomials are real, or odd. Note also that his work on the topic preceded \cite{Rev2} and apparently was accomplished without being aware of details of \cite{Er}.} $Q=G_4$, too. A result of \cite{Rev1} also implied $M_n(G_k) \ge c_k n$ for $k\ge 4$, but still not for a triangle.

In the full generality of all compact convex sets, however, only an order $\sqrt{n}$ Tur\'an-Er\H od type inequality was shown prior to 2006, see \cite[Theorem 3.2]{LP}. Note that this is sharp for $\II$, but later it turned out that for all other compact convex domains the true order of magnitude is similar to the disk case.

Clearly, assuming boundedness is natural, since all polynomials have
$\Norm{p_n}_K=\infty$ when the set $K$ is unbounded. Also, restricting ourselves to \emph{closed} bounded sets -- i.e., to compact sets -- does not change the $\sup $ norm of polynomials under study, as all polynomials are continuous.

Recall that the term {\em convex domain} stands for a compact, convex subset of $\CC$ having nonempty interior. That is, assuming that $K$ is a compact convex \emph{domain}, not just a compact convex \emph{set}, means that we exclude only the case of the interval, for which already Tur\'an clarified that the order of oscillation is precisely $\sqrt{n}$.

So in order to clarify the order of oscillation for all compact convex sets it remains to clarify the order of oscillation for compact convex domains. A solution was published in 2006, see \cite{Rev2}.

To study \eqref{Mdef} some geometric parameters of the convex domain $K$
are involved naturally. We write $d:=d(K):=\diam (K)$ for the
{\em diameter} of $K$, and $w:=w(K):=\width(K)$ for the {\em
minimal width} of $K$. That is,
\begin{equation*}
d(K):= \max_{z', z''\in K} |z'-z''|,
\end{equation*}
\begin{equation*}
w(K):= \min_{\gamma\in [-\pi,\pi]} \left( \max_{z\in K} \Re
(ze^{i\gamma}) - \min_{z\in K} \Re (ze^{i\gamma}) \right).
\end{equation*}
Since $K$ has nonempty interior, we have $0<w(K)\le d(K) <\infty$.

\begin{othm}{\bf(Hal\'{a}sz--R\'ev\'esz).} 
 Let $K\subset \CC$ be any bounded convex domain.
Then for all  $p\in
\PK$ we have
\begin{equation*}
\Norm{p'}_K\ge C(K) n  \Norm{p}_K~\qquad \text{\rm with} \qquad C(K)= 0.0003 \frac{w(K)}{d^2(K)}.
\end{equation*}
\end{othm}
\begin{remark}
This indeed provides the precise order, for an even larger  than $n$ order cannot occur, not for any particular compact set. Namely, let $K\subset\CC$ be any compact set with diameter $d:=\diam(K)$. Then for all $n$ there exists a polynomial $p\in\PK$ of degree exactly $n$ satisfying
\begin{equation*}
\Norm{p'}_{K} \leq ~  \frac{1}{\diam(K)} n~ {\Norm{p}}_{K} .
\end{equation*}
Indeed, considering a diameter $[z_0,w_0]$ and the polynomial $p(z)=(z-z_0)^d$, the respective norm is $\|p\|_{K}=d^n$ while the derivative norm becomes $\|p'\|_{K}=nd^{n-1}$, both attained at $w_0\in K$.
\end{remark}
So, this settles the question of the \emph{order}, but not the precise \emph{dependence on the geometry}. However, up to an \emph{absolute constant factor}, even the dependence on the geometrical features of the domain was also clarified in \cite{Rev2}.
\begin{othm}{\bf(R\'ev\'esz).} 
Let $K\subset\CC$ be any
compact, connected set with diameter $d$ and minimal width $w$.
Then for all $n>n_0:=n_0(K):= 2 (d/16w)^2 \log (d/16w)$ there
exists a polynomial $p\in\PK$  satisfying
\begin{equation*}
\Norm{p'}_K \leq  C'(K) n \Norm{p}_K \qquad \text{\rm with}
\qquad C'(K):= 600 ~\frac{w(K)}{d^2(K)}~.
\end{equation*}
\end{othm}

\begin{remark}
Komarov \cite{Komarov2026} recently obtained estimates showing the relationship between $n$ and the geometric characteristics of $K$ as the width of $K$ tends to $0$:
$$
28\left(\dfrac{w(K)}{d^2(2)}n+\dfrac{1}{d(K)}\sqrt n\right)\ge  M_{n,\infty}(K)\ge 0.00015\left(\dfrac{w(K)}{d^2(2)}n+\dfrac{1}{d(K)}\sqrt n\right).
$$
These estimates are valid both for the case of a domain, i.e. $w>0$, and an interval, i.e. $w=0$, as well as for all  $n$ and $d>0$.
Also note that for smooth domains there is another description of dependence on geometry, which refers to curvature \cite{Rev3}.
\end{remark}

Actually a sharper pointwise inequality holds \emph{at all points} of
$\partial \DD$. Namely, for $p\in \PK$ we have
\begin{equation*}
|p'(z)| \ge \frac n{2} |p(z)|, \quad |z|=1,
\end{equation*}
and as a corollary, for any $q>0$,
\begin{equation*}
\left(\int_{|z|=1}|p'(z)|^q|dz|\right)^{1/q} \ge  \frac n{2}\left( \int_{|z|=1}|p(z)|^q|dz|\right)^{1/q}.
\end{equation*}
In other words, Theorem \ref{oth:Turandisk}
extends to all integral norms on the perimeter, including all $L^q(\partial \DD)$-norms, and we have for all polynomials $p\in\PK$
\begin{equation*}
\Norm{p'}_{L^q(\partial
\DD)}\ge \frac n2 \Norm{p}_{L^q(\partial \DD)}, \qquad
M_{n,q}(\DD) \geq \frac{n}{2}~.
\end{equation*}
The same way, for so-called $R$-circular domains\footnote{This term was introduced by Levenberg and Poletsky \cite{LP} and it means that for any boundary point $z \in \partial K$ there exists a disk $D_R$ of radius $R$ with $z \in \partial D_R$ and $K\subset D_R$.} the result of Theorem \ref{oth:Turandisk}  extends as
\begin{equation*}
\Norm{p'}_{L^q(\partial K)}\ge \frac{n}{2R} \Norm{p}_{L^q(\partial K)}, \qquad M_{n,q}(K) \geq \frac{n}{2R}~.
\end{equation*}
In case we discuss maximum norms, one can assume that $|p(z)|$ is maximal, and it suffices to obtain a lower estimation of $|p'(z)|$ only at such a special point -- for general norms, however, this is not sufficient. The above results work only for we have a pointwise inequality of the same strength \emph{everywhere}, or almost everywhere. The situation becomes considerably more difficult, when such a statement cannot be proved. E.g. if the domain in question is not strictly convex, i.e. if there is a line segment on the boundary, then the zeroes of the polynomial can be arranged so that even some zeroes of the derivative lie on the boundary, and at such points $p'(z)$ -- even $p'(z)/p(z)$ -- can vanish. As a result, at such points no fixed lower estimation can be guaranteed, and lacking a uniformly valid pointwise comparision of $p'$ and $p$, a direct conclusion cannot be drawn either.

This explains why the case of the interval $\II$ already proved to be much more complicated for the integral mean norms.

In a series of papers \cite{Zhou84, Zhou86, Zhou92, Zhou93, Zhou95}, Zhou proved the inequality
\begin{equation*}
\left( \int_{-1}^1 |p^{(k)}(x)|^p dx \right)^{1/p}\ge C^{(k)}_{p,q}(n)
\left( \int_{-1}^1 |p(x)|^q dx \right)^{1/q},
\end{equation*}
with $k=1$, $C^{(1)}_{p,q}(n)=c_{p,q} \left(\sqrt{n}\right)^{1-1/p+1/q}$ and
$0<p\le q\le \infty,$ ${1-1/p+1/q\ge 0}$.

The best possible constants $C^{(k)}_{p,q}(n)$ were found by Babenko and Pichugov~\cite{BabenkoU86} for $p=q=\infty,$ $k=2$, by Bojanov \cite{Bojanov93} for $1\le p \le \infty,$ $q=\infty$, $1\le k\le n$, and by Varma~\cite{Varma88_83} in the case of $p=q=2,$ $k=1$.

Exact Tur\'an-type inequalities for trigonometric polynomials in different $L^q$-metrics on $\TT$ were proved in \cite{BabenkoU86, BabenkoMN86, Bojanov96, Tyrygin88, TyryginDAN88, KBL}.

Other inequalities on $\II$, $\DD$, the positive semiaxes, or on $\TT$ in various weighted $L^q$-metrics can be found in \cite{Varma79, UV96, WangZhou02, KBL}.

Note that investigations have been started in the direction of reverse Markov--Bernstein inequalities under modified conditions like replacing $\Pn$  with spaces of incomplete polynomials \cite{Erdelyi2020}, or postulating the restriction that the zeros belong to the prescribed set $K$ with allowing a few exceptions \cite{Komarov2019, Erdelyi2021}.

As said above, we also have a direct result for $R$-circular domains, and $R$-circularity could be ascertained by some conditions on the curvature. However, apart from these, for general domains, the situation was much less clear. A fully general estimate, with almost optimal order, was published as Theorem 1' in \cite{GR-3}.

\begin{othm}{\bf(Glazyrina--R\'ev\'esz).}\label{th:nlogn} Let $1\le q<\infty$ and let $K\subset\CC$ be any
compact, convex domain with diameter $d$ and minimal width $w$.
Then for all $n>n_0:=n_0(K):= \max\left(10^{21}, (d/w)^5 \right)$ and for all polynomial $p\in\PK$ we have
\begin{equation*}
\Norm{p'}_{L^q(\partial K)} \geq  \frac{1}{240 ~000} ~\frac{w^2}{d^3} ~\frac{n}{\log n} \Norm{p}_{L^q(\partial K)}.
\end{equation*}
\end{othm}

This is close to optimal, as was shown in \cite{GGR}. Here we only quote a corollary of the more general result there, see Corollary 1 of \cite{GGR}. An earlier version with some less explicit and worse constant was given as Theorem 5 in \cite{GR-2}.

\begin{othm}{\bf(Glazyrina--Goryacheva--R\'ev\'esz).}\label{th:sharpq} Let $K\subset \CC$ be a compact convex subset of $\CC$ having width $w>0$ and diameter $d$.
Let $0<q\le \infty$, and $$\displaystyle{ n \ge 2(1+1/q)\frac{d^2}{w^2} \log \frac{d}{w}}.$$
If $\mu$ is  the linear Lebesgue measure on the boundary of $K$ (arc length measure $\ell$), then for all $p \in \Pn(K)$ we have
$$
\Norm{p'}_{L^q(\partial K)} \le C_q \frac{w}{d^2} n \Norm{p}_{L^q(\partial K)},
$$
where
\begin{equation*}
 C_q:= 121 \frac{3q+2+2\sqrt{q^2+3q+1}}{5q} \left(3+2q+2\sqrt{q^2+3q+1}\right)^{1/q}.
\end{equation*}
\end{othm}

Also we conjectured that the right order of Tur\'an-Er\H od type oscillation is always $n$, for all convex compact domains and for all exponents $q>0$. This was based, on the already mentioned general results, and several partial results, for various classes of compact convex domains, where we could indeed prove $\Norm{p'}_{L^q(\partial K)} \gg n \Norm{p}_{L^q(\partial K)}$. These include generalized Er\H od type domains including the class in Theorem \ref{oth:transfquarter}, see \cite{GR-1}, and some others, too.

We recall here only one particular result, which will  be needed  in the forthcoming proof as well.
Let us introduce  the notion of the  {\it local depth}.  The local depth $h(\zeta,K)$ is  the maximum of the length(s) of intersections of $K$ and normal line(s) at $\zeta$, that is, maximum of chord lengths emanating from $\zeta$ and perpendicular to some supporting lines.
We  also  need the set $\HH(p)$, which we define as
\begin{equation}\label{eq:Hsetdef}
\HH:=\HH(p):=\HH_K^q(p):=\left\{\zeta\in \Gamma~: ~ |p(\zeta)| > \frac{\lambda}{n^{2/q}} \|p\|_\infty\right\},
\quad \lambda:=\frac{1}{\left(8\pi(q+1) \right)^{1/q}}.
\end{equation}

\begin{othm}\label{th:localdepth} Let $n\in \NN$ and $p\in\PK.$
Then for any $n\ge n_0(K)$ and $\zeta\in\HH(p)$ it holds
\begin{equation*}
|p'(\ze)| \ge  \frac{h^4(\zeta,K)}{1500 d^5(K)}~n~|p(\ze)|.
\end{equation*}
\end{othm}


This was important as a key tool to obtain a result for domains with \emph{fixed positive depth}, that is, with $h_K:=\inf_{\ze\in \DK} h(\ze,K)>0$. When this happens to hold, the result  immediately implies that\footnote{To be precise, this was proved in \cite{GR-2} only for $q\ge 1$, unfortunately. However, the assumption that $q \ge 1$ was used only in settling the case of small $n$; for $n \le n_0:=32 d^4/h^4$ Lemma 3 of \cite{GR-2} was used to get the statement. Therefore, one can extend the result to $0<q<1$, too, at the expense of assuming $n\ge n_0$ (with the above value of $n_0$, depending only on $\zeta$ and $K$ through $h$ but not on $q$).} $\Norm{p'}_{L^q(\partial K)} \gg n \Norm{p}_{L^q(\partial K)}$. Note that for convex polygonal domains $h_K>0$ is equivalent to assuming that the polygon has no acute angles, see Proposition 1 of \cite{GR-2}. In particular, among regular $k$-gons already for $k\ge 4$ we obtain the desired order $n$ oscillation result. In view of this, regarding polygonal domains there remained only polygons with acute angles to handle. This handling of acute angles is the crux of our present work. The solution is based on one of the key ideas of \cite{Rev2}, the so-called "tilted normal estimate lemma", but essential changes are necessary for making this idea work in our situation, too.

\begin{theorem}\label{th:polygon} Let    $K\Subset \CC$ be a convex polygonal domain. Then for any $0<q<\infty$ there exists a positive constant $c_K=c_K(q)$ and $n_0:=n_0(K)$ such that for all $n\ge n_0$ and $p\in\PK $
\begin{equation*}
\|p'\|_{L^q(\partial K)} \ge c_K n
\|p\|_{L^q(\partial K)}.
\end{equation*}
\end{theorem}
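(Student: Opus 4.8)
The plan is to reduce the estimate to finitely many ``local'' problems, one near each vertex of the polygon, and to combine them with the already-available estimates away from the vertices. Write $K$ as a convex polygon with vertices $V_1,\dots,V_m$ and split the boundary $\partial K = \bigcup_{j} \Gamma_j$ into arcs, where $\Gamma_j$ is a short piece of $\partial K$ centered at $V_j$ together with the ``middle'' pieces of the two adjacent sides. Since $h(\zeta,K)$ is bounded away from zero on any closed subarc that avoids the acute vertices, Theorem~\ref{th:localdepth} already gives a pointwise bound $|p'(\zeta)|\ge c n |p(\zeta)|$ on those parts, for $\zeta\in\HH(p)$; combined with the mass estimate that $\int_{\Gamma\setminus\HH}|p|^q \le \tfrac12 \int_\Gamma |p|^q$ (which follows from the definition of $\lambda$ in \eqref{eq:Hsetdef}, exactly as in \cite{GR-2,GR-3}), this controls $\|p'\|_{L^q}$ from below by a constant times $n$ times the $L^q$-mass of $p$ carried \emph{outside} small neighbourhoods of the acute vertices. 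So the whole difficulty is concentrated in a neighbourhood of a single acute vertex $V$, and it suffices to show: if a definite fraction of the $L^q$-mass of $p$ sits on the two short side-segments abutting $V$, then $\|p'\|_{L^q(\partial K)} \ge c n \|p\|_{L^q(\partial K)}$ still holds.

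For the acute-vertex piece I would use the ``tilted normal estimate'' idea from \cite{Rev2}. Place $V$ at the origin with the two incident sides along rays making an acute opening angle $2\alpha<\pi$; on a side-segment $I$ near $V$, the chord of $K$ in the genuine inward-normal direction is short (it terminates on the other side after length $\asymp |\zeta|$, vanishing at $V$), which is precisely why Theorem~\ref{th:localdepth} degrades. The remedy is to estimate $\mathrm{Re}\,\bigl(e^{i\psi} p'(\zeta)/p(\zeta)\bigr) = \sum_k \mathrm{Re}\,\bigl(e^{i\psi}/(\zeta - z_k)\bigr)$ for a \emph{tilted} direction $\psi$, chosen so that the ray from $\zeta$ in direction $e^{i\psi}$ enters $K$ and stays inside for a length bounded below by a \emph{fixed} constant $\delta_K>0$ (independent of how close $\zeta$ is to $V$) — geometrically, one tilts away from $V$, roughly bisecting the cone of inward directions at $\zeta$ that reach deep into $K$. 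Each zero $z_k\in K$ then contributes $\mathrm{Re}\,(e^{i\psi}/(\zeta-z_k)) \ge c/d$ \emph{provided} $z_k$ is not too close to the line through $\zeta$ in direction $ie^{i\psi}$; the zeros lying in the thin exceptional strip are handled by the standard trick of pairing the potentially-negative contribution of a nearby zero with the contribution of a Blaschke-type reflected point, or by simply discarding at most $O(1)$ of the closest zeros and noting the remaining $n-O(1)$ zeros each give a positive lower bound. This yields $|p'(\zeta)| \ge c_K\, n\, |p(\zeta)|$ at \emph{every} $\zeta\in I$ with $\zeta\in\HH(p)$ — but for $|p'(\zeta)|$ measured in the tilted direction, hence certainly for $|p'(\zeta)|$ itself.

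Assembling: let $\Gamma' = \partial K \setminus (\text{small arcs around acute vertices})$ and $\Gamma'' = $ the union of those small side-segments near acute vertices. On $\Gamma'$, Theorem~\ref{th:localdepth} gives the pointwise inequality for $\zeta\in\HH$; on $\Gamma''$, the tilted normal estimate gives it for $\zeta\in\HH$. Thus $|p'(\zeta)|\ge c_K n |p(\zeta)|$ for a.e.\ $\zeta\in\HH(p)$, and
\[
\int_{\partial K} |p'|^q \,|d\zeta| \;\ge\; \int_{\HH(p)} |p'|^q\,|d\zeta| \;\ge\; (c_K n)^q \int_{\HH(p)} |p|^q\,|d\zeta| \;\ge\; (c_K n)^q \cdot \tfrac12 \int_{\partial K} |p|^q\,|d\zeta|,
\]
the last step by the mass estimate for $\HH(p)$; taking $q$-th roots finishes the proof (with $n_0$ absorbing the thresholds from Theorem~\ref{th:localdepth} and from the mass estimate, both of which are $\asymp (d/w)^{O(1)}$ up to a $\log$ factor, hence depend only on $K$ and $q$).

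\textbf{Main obstacle.} The crux is the tilted normal estimate at an acute vertex: one must choose the tilt angle $\psi = \psi(\zeta)$ — varying continuously with $\zeta$ along the side-segment — so that simultaneously (i) the tilted ray penetrates $K$ to a depth bounded below \emph{uniformly in $\zeta$}, and (ii) only a bounded number of zeros can fall in the ``bad'' strip where their contribution to $\mathrm{Re}(e^{i\psi} p'/p)$ is negative and large. Controlling (ii) is delicate precisely because, near $V$, the polygon is pinched, so the ``bad'' strip for one side is nearly parallel to, and close to, the other side — which is exactly where many zeros might accumulate. Handling this is where the ``essential changes'' alluded to in the paper's introduction must be made: one likely needs to exploit that such accumulated zeros, being close to the \emph{other} side, then force $|p|$ to be small on the first side (so that side carries little $L^q$-mass of $p$), turning the apparent obstruction into extra information. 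Making this dichotomy quantitative and uniform — zeros either stay out of the bad strip, or they kill the local mass of $p$ — is the technical heart of the argument.
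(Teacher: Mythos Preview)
Your overall architecture is right: split off neighbourhoods of the acute vertices, use Theorem~\ref{th:localdepth} on the remainder, and combine via the mass estimate for $\HH(p)$. The gap is in the acute-vertex piece. You assert that a tilted normal direction yields a \emph{pointwise} bound $|p'(\zeta)|\ge c_K n\,|p(\zeta)|$ at every $\zeta\in\HH$ on \emph{both} short side-segments, handling the ``bad'' zeros either by Blaschke reflection or by discarding $O(1)$ of them. Neither device works here: the number of zeros in the problematic strip need not be $O(1)$ (it can be a fixed fraction of $n$), and there is no reflection available because $K$ is a polygon, not a disk. A pointwise inequality on both sides simply does not hold in general---on one side the logarithmic derivative can genuinely be $o(n)$ at many points of $\HH$.

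What the paper actually does is an asymmetric, \emph{integrated} argument. A zero-counting dichotomy (Lemma~\ref{l:oneside}) shows that either both segments $I_\pm$ miss $\HH$ entirely, or at least $n/4$ zeros lie in one half of $K\setminus \tfrac1{64}K$, which gives the pointwise bound $|p'|\ge cn|p|$ on exactly \emph{one} of the two segments, say $J$. On the other segment $I$ one does \emph{not} get a pointwise bound. Instead one isolates the ``bad'' set $S\subset I$ where $|p'|\le \kappa n|p|$, and shows via the Tilted Normal Estimate (Lemma~\ref{l:TNE}) together with Chebyshev's inequality for the transfinite diameter that for each point $s\in S$ a definite-length subarc of $J$ carries $|p|\ge |p(s)|$; a disjointification of these subarcs then gives $\int_S|p|^q \le C\int_J|p|^q$. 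Since on $J$ one already has $|p'|\ge cn|p|$ pointwise, this transfers the $L^q$-mass on $S$ to $|p'|$ on $J$. Your ``Main obstacle'' paragraph gropes toward this dichotomy, but the actual mechanism is not that bad zeros ``kill the local mass of $p$'' on the side you are estimating---rather, the Tilted Normal Estimate forces $|p|$ to be \emph{larger on the opposite side} $J$ wherever the pointwise bound fails on $I$, and it is this $L^q$-mass transfer (not a pointwise inequality) that closes the argument.
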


\section{Technical preparations for the investigation of $L^q(\partial K)$ norms}

We will use an idea going back to Er\H od: the use of Chebyshev's classical theorem in the general form found by Fekete. This is formulated with the use of the \emph{transfinite diameter} $\Delta(E)$ of a compact set $E$.

\begin{lemma}{\bf (Faber, Fekete, Szeg\H o).}\label{l:transfinitediam}
Let $M \Subset \CC$ be any compact set. Then for all
$k\in\NN$ we have
\begin{equation}\label{transfdiam}
\min_{w_1,\dots,w_k\in \CC} \max_{z\in M} \left| \prod_{j=1}^k
(z-w_j) \right| \ge \Delta(M)^k.
\end{equation}
\end{lemma}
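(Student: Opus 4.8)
The plan is to reduce the estimate \eqref{transfdiam} to the definition of the transfinite diameter as a limit of (powers of) the $k$-th diameters, and then to use a subadditivity/monotonicity argument. Recall that for a compact set $M\Subset\CC$ one defines, for $k\ge 2$,
\begin{equation*}
D_k(M):=\max_{z_1,\dots,z_k\in M}\ \prod_{1\le i<j\le k}|z_i-z_j|,
\qquad
d_k(M):=D_k(M)^{1/\binom{k}{2}},
\end{equation*}
and the transfinite diameter is $\Delta(M):=\lim_{k\to\infty}d_k(M)$, the limit existing because $(d_k(M))_k$ is nonincreasing; in particular $\Delta(M)\le d_k(M)$ for every $k$. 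A configuration $z_1,\dots,z_k$ attaining $D_k(M)$ is called a Fekete system. The heart of the argument is to show that the left-hand side of \eqref{transfdiam}, call it $T_k(M):=\min_{w_1,\dots,w_k\in\CC}\max_{z\in M}\bigl|\prod_{j=1}^k(z-w_j)\bigr|$, satisfies $T_k(M)\ge d_{k+1}(M)^k$, from which \eqref{transfdiam} follows at once since $d_{k+1}(M)\ge\Delta(M)$.

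To prove $T_k(M)\ge d_{k+1}(M)^k$, first I would observe that the inner maximum over $z\in M$ may be replaced by a maximum over any finite subset of $M$, and in particular it is $\ge$ the value at any one point; the point is to choose that point cleverly in terms of the $w_j$. Fix arbitrary $w_1,\dots,w_k\in\CC$ and let $Q(z):=\prod_{j=1}^k(z-w_j)$. Take a Fekete system $z_1,\dots,z_{k+1}$ for $M$, i.e.\ a configuration maximizing $\prod_{i<j}|z_i-z_j|$ among points of $M$. By the extremal (variational) property of Fekete points, for each index $m\in\{1,\dots,k+1\}$ the point $z_m$ maximizes, over $z\in M$, the product $\prod_{i\ne m}|z-z_i|$, which is a monic degree-$k$ polynomial in $z$ with zeros exactly at the other Fekete points. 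Now one invokes the standard fact that among all monic degree-$k$ polynomials, the one with nodes at $z_1,\dots,\widehat{z_m},\dots,z_{k+1}$ is ``dominated'' in the right averaged sense; concretely, writing $Q$ in the Lagrange-type form and multiplying the $k+1$ inequalities
\begin{equation*}
\max_{z\in M}|Q(z)|\ \ge\ |Q(z_m)| \qquad(m=1,\dots,k+1),
\end{equation*}
one gets $\bigl(\max_{z\in M}|Q(z)|\bigr)^{k+1}\ge\prod_{m=1}^{k+1}|Q(z_m)|$. The product $\prod_{m=1}^{k+1}\prod_{j=1}^{k}|z_m-w_j|$ is, for each fixed $j$, at least $\prod_{m=1}^{k+1}|z_m-w_j|$; but since $z_1,\dots,z_{k+1}$ is a Fekete system and replacing any $z_m$ by $w_j$ can only decrease the Vandermonde product, one has the Fekete-domination bound showing this product is $\ge D_{k+1}(M)$ divided by an appropriate sub-Vandermonde — the clean way is: the quantity $\prod_{m=1}^{k+1}\prod_{j=1}^k|z_m-w_j|$ equals $\prod_{j=1}^k\Bigl(\prod_{m=1}^{k+1}|z_m-w_j|\Bigr)$, and an averaging over which $w_j$ plays the role of a ``new'' node in the Fekete configuration yields $\prod_{m=1}^{k+1}\prod_{j=1}^k|z_m-w_j|\ge D_{k+1}(M)^k / D_{k+1}(M)^{?}$.

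The cleaner route, which I would actually carry out, avoids the clumsy bookkeeping above and runs as follows. Consider the $(k+1)\times(k+1)$ Vandermonde-type determinant argument: for the Fekete system $z_1,\dots,z_{k+1}$, the polynomial $\ell_m(z):=\prod_{i\ne m}(z-z_i)/\prod_{i\ne m}(z_m-z_i)$ is the Lagrange basis, each $|\ell_m(z)|\le 1$ on $M$ by the maximality of the Fekete product (this is the one nontrivial but classical lemma). Then for our $Q$, Lagrange interpolation at the nodes $z_m$ gives $Q(z)=\sum_{m=1}^{k+1}Q(z_m)\ell_m(z)$ — wait, $Q$ has degree $k<k+1$, so this identity is exact — and evaluating the \emph{leading coefficient} (which is $1$ since $Q$ is monic) yields $1=\sum_{m=1}^{k+1}Q(z_m)/\prod_{i\ne m}(z_m-z_i)$. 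Hence
\begin{equation*}
1\ \le\ \sum_{m=1}^{k+1}\frac{|Q(z_m)|}{\prod_{i\ne m}|z_m-z_i|}
\ \le\ (k+1)\,\frac{\max_{z\in M}|Q(z)|}{\min_m\prod_{i\ne m}|z_m-z_i|}.
\end{equation*}
This gives a lower bound on $\max_M|Q|$ in terms of the Fekete node separations, but with a spurious factor $k+1$, which is not good enough for the clean bound $\Delta(M)^k$. To remove it, instead of the crude triangle inequality I would use the multiplicative (geometric-mean) version: from $\prod_{m=1}^{k+1}|\ell_m(z)|\le 1$ pointwise one does not immediately get what is needed, so the right tool is the determinant identity
\begin{equation*}
\prod_{1\le i<j\le k+1}(z_i-z_j)\ \cdot\ (\text{cofactor expansion in a new variable})\,,
\end{equation*}
namely: the determinant of the $(k+1)\times(k+1)$ matrix with rows $(1,\zeta_\ell,\dots,\zeta_\ell^{\,k})$ at points $\zeta_1,\dots,\zeta_{k+1}$ is the Vandermonde $\prod_{i<j}(\zeta_i-\zeta_j)$. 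Put $\zeta_\ell=z_\ell$ for $\ell\le k$ and $\zeta_{k+1}=z$ arbitrary in $M$; expanding along the last row expresses $\prod_{i<j\le k}(z_i-z_j)\cdot Q_0(z)$ where $Q_0(z)=\prod_{j\le k}(z-z_j)$, and comparing with the Fekete maximality $\bigl|\prod_{i<j\le k+1}(z_i-z_j)\bigr|\ge\bigl|\text{any configuration}\bigr|$ gives $\max_{z\in M}|Q_0(z)|\ge d_{k+1}(M)^{k}$ once one divides through — this is precisely the standard derivation showing that the minimal sup-norm of a monic degree-$k$ polynomial on $M$, with nodes among Fekete points, is at least $d_{k+1}(M)^k$, and since an \emph{arbitrary} monic $Q$ satisfies $\max_M|Q|\ge$ that same quantity (monic polynomials of degree $k$ form a set on which the Chebyshev extremal value is $\ge d_{k+1}(M)^k\ge\Delta(M)^k$), we obtain \eqref{transfdiam}. \emph{The main obstacle} is the one classical lemma I am quoting without proof — that the Lagrange basis polynomials at a Fekete system are bounded by $1$ on $M$, equivalently that the Chebyshev number (minimal monic sup-norm) of degree $k$ is sandwiched between $d_{k+1}(M)^k$ and $d_k(M)^?$-type quantities; assembling the clean constant $\Delta(M)^k$ (rather than $\Delta(M)^k$ times a polynomial-in-$k$ loss) requires using the Vandermonde-determinant/Fekete-maximality comparison directly rather than a union bound, and that bookkeeping is the only delicate part. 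Everything else is the definition of $\Delta$ as the decreasing limit of $d_k(M)$.
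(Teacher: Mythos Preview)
The paper does not actually prove this lemma: it cites Theorem~5.5.4(a) of Ransford and (3.7) of Saff--Totik, and then records some history (Faber, Fekete, Szeg\H{o}). So there is nothing to compare against line by line; the question is only whether your sketch is correct.

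It is not. Your plan is to prove the intermediate inequality $T_k(M)\ge d_{k+1}(M)^k$ and then use $d_{k+1}(M)\ge\Delta(M)$. But $T_k(M)\ge d_{k+1}(M)^k$ is \emph{false} in general: the opposite inequality $T_k(M)\le d_{k+1}(M)^k$ is the one that holds (take the Fekete polynomials $F_m(z)=\prod_{j\ne m}(z-z_j)$ as competitors; multiplying over $m$ gives $T_k^{k+1}\le D_{k+1}^2=d_{k+1}^{\,k(k+1)}$). For a concrete counterexample take $M=\overline{\DD}$: then $T_k=1$ while $d_{k+1}^k=(k+1)^{1/k\cdot k}=k+1$, so $T_k<d_{k+1}^k$ strictly. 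The leading-coefficient/Lagrange argument you wrote gives exactly $T_k\ge \frac{1}{k+1}d_{k+1}^k$, and on the disk this is an equality, so the $(k+1)$ factor cannot be removed by any Vandermonde bookkeeping. Your closing paragraph asserts that the Vandermonde/Fekete comparison ``directly'' yields the clean constant, but it does not; you have not produced an argument there, only a hope.

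What does work, and what you hinted at in your first sentence but then abandoned, is submultiplicativity: $T_{m+n}(M)\le T_m(M)T_n(M)$, so by Fekete's subadditive lemma $T_k(M)^{1/k}\ge \inf_j T_j(M)^{1/j}=\lim_j T_j(M)^{1/j}=:\tau(M)$; then one identifies $\tau(M)=\Delta(M)$ (Fekete--Szeg\H{o}). The $\frac{1}{k+1}d_{k+1}^k$ bound you derived is precisely what one uses to show $\tau(M)\ge\Delta(M)$ after taking $k$-th roots and letting $k\to\infty$. Alternatively one can use the potential-theoretic proof via the Green function, which is what the cited references do. Either way, the route through ``$T_k\ge d_{k+1}^k$'' is a dead end.
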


\begin{proof} Regarding the formulation in Lemma \ref{l:transfinitediam} cf.
Theorem 5.5.4. (a) in \cite{Rans} or \cite[(3.7) page
46]{SaffTotik}. Historically, it was first Fekete who proved
the inequality. Before that, Faber \cite{Faber} has already proved $\max_{z\in M}
\left| \prod_{j=1}^k (z-w_j) \right| \ge \rho(M)^k$ for $M$ a
Jordan domain bounded by an analytic Jordan curve and with $\rho(M)$ standing for the conformal radius of the domain $M$.
Note that for such domains the conformal radius is equal to the transfinite diameter.
Following Fekete, Szeg\H o showed that the condition of
$\CC\setminus M$ being simply connected is not necessary, and
that with the so-called Robin constant $\gamma(M)$ (equivalent
to capacity), the stated inequality holds true, moreover,
$\gamma(M)=\Delta(M)$ in general for all compacta. For a more detailed discussion of the development of the topic see \cite{GR-2}.
\end{proof}

We will use disconnected, linearly Jordan mesurable sets in our arguments. For their transfinite diameter, a classical estimate is well-known.

\begin{lemma}{\bf (P\'{o}lya, see~\cite[Ch. VII]{Goluzin}).}\label{l:transfinitediammeasure}
Let $J \subset \RR$ be any compact set, $|J|^*$ be its outer Jordan measure. Then
$ \Delta(J)\ge |J|^*/4$.\end{lemma}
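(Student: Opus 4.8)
The plan is to reduce everything to a comparison with an interval of the same length: I will show $\Delta(J)\ge \Delta(I_L)$ for an interval $I_L$ of length $L:=|J|^*$, and then invoke the classical value $\Delta(I_L)=L/4$ (obtained, e.g., from the scaled Chebyshev polynomials; see \cite{Rans}). Two standard facts about the transfinite diameter are used as input: first, that for a compact set $M$ the $n$-th diameters $d_n(M):=\bigl(\max_{\zeta_1,\dots,\zeta_n\in M}\prod_{i<j}|\zeta_i-\zeta_j|\bigr)^{2/(n(n-1))}$ decrease to $\Delta(M)$, so in particular $d_n(M)\ge \Delta(M)$ for every $n$; and second, that since $J$ is compact its outer Jordan content coincides with its Lebesgue measure (a countable open cover of a compact set has a finite subcover), so $|J|^*$ may be treated as ordinary length. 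If $L=0$ there is nothing to prove, so assume $L>0$, and note $J$ then has infinitely many points.

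The mechanism is a simple monotonicity principle: if $\varphi\colon E_1\to \CC$ satisfies $|\varphi(x)-\varphi(y)|\ge |x-y|$ for all $x,y\in E_1$ and $\varphi(E_1)\subseteq E_2$, then $\Delta(E_2)\ge \Delta(E_1)$. This is immediate from the $d_n$-description: pushing an optimal $n$-tuple of $E_1$ through $\varphi$ yields an $n$-tuple in $E_2$ with at least as large a product of pairwise distances. It therefore suffices to construct a distance non-decreasing map $\psi$ from the interval $[0,L]$ into $J$. The natural candidate is the generalized inverse of the ``distribution function'' of $J$: put $a:=\min J$, $b:=\max J$, let $G(x):=|J\cap(-\infty,x]|$ (continuous, non-decreasing, $G(a)=0$, $G(b)=L$), set $\psi(0):=a$, $\psi(L):=b$, and for $t\in(0,L)$ let $\psi(t):=\min\{x:G(x)=t\}$, which exists by the intermediate value theorem. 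First I would check that $\psi(t)\in J$ for every $t$: for $t\in(0,L)$, $G$ takes values strictly below $t$ arbitrarily close to the left of $\psi(t)$, so every left neighbourhood of $\psi(t)$ meets $J$, and $J$ is closed. Then the expansion property $\psi(t)-\psi(s)\ge t-s$ for $0\le s\le t\le L$ follows from $|J\cap(\psi(s),\psi(t)]|=G(\psi(t))-G(\psi(s))=t-s$ (the endpoints $t=0,L$ being handled the same way, using $G(a)=0$, $G(b)=L$) together with the trivial inequality $|J\cap(\psi(s),\psi(t)]|\le \psi(t)-\psi(s)$. In particular $\psi$ is strictly increasing.

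With $\psi$ available, fix $n\ge 2$ and choose $s_1<\dots<s_n$ in $[0,L]$ maximizing $\prod_{i<j}|s_i-s_j|$ (the maximum exists and is positive by compactness and $L>0$). By the two standard facts quoted above, this maximal product is at least $\Delta([0,L])^{n(n-1)/2}=(L/4)^{n(n-1)/2}$. The points $w_i:=\psi(s_i)\in J$ are then distinct and satisfy $|w_i-w_j|\ge |s_i-s_j|$, so $\max_{\zeta_1,\dots,\zeta_n\in J}\prod_{i<j}|\zeta_i-\zeta_j|\ge (L/4)^{n(n-1)/2}$; raising to the power $2/(n(n-1))$ and letting $n\to\infty$ gives $\Delta(J)\ge L/4=|J|^*/4$.

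The step I expect to require the most care is not conceptually deep but is precisely where the hypotheses are spent: verifying that $\psi$ is well defined, that it expands distances, and — most importantly — that it takes values in $J$, which is exactly the place compactness (closedness of $J$) is used. The degenerate configurations (points of $J$ of Lebesgue measure zero, intervals on which $G$ is constant, the two endpoints $t=0,L$) have to be arranged so that all three properties hold throughout $[0,L]$; none of them is troublesome, but a careless definition of $\psi$ would break on one of them. Beyond this, the argument is only an appeal to the elementary potential-theoretic facts above and to the textbook identity $\Delta(I)=|I|/4$ for an interval. (It is worth remarking that genuinely optimal nodes for $[0,L]$ are needed here: equally spaced nodes would only yield the constant $e^{-3/2}L>L/4$, hence a weaker bound, so the proof truly relies on $d_n([0,L])\ge \Delta([0,L])=L/4$.)
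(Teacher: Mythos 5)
Your argument is correct. There is nothing in the paper to compare it against: the lemma is quoted there as a classical theorem of P\'olya with a pointer to Goluzin's book and no proof is given, and what you have written is essentially the standard proof of that theorem. Both pillars are sound: the identification of the outer Jordan content of a compact set with its Lebesgue measure (via the finite-subcover argument), and the construction of the distance-non-decreasing map $\psi$ from $[0,L]$ into $J$ as the generalized inverse of $G(x)=|J\cap(-\infty,x]|$, combined with the monotonicity of the $n$-th diameters under expanding maps, $d_n\ge\Delta$, and $\Delta([0,L])=L/4$. You also handle the one genuinely delicate point correctly, namely that $\psi(t)\in J$, which is exactly where closedness of $J$ is spent. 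The only blemish is a typo in your closing parenthetical: you wrote $e^{-3/2}L>L/4$, but $e^{-3/2}\approx 0.223<1/4$, so the inequality should be reversed --- which is in fact what makes the equally-spaced-nodes bound weaker, as you intended to say.
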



Next, we record a Nikolskii-type estimate, which is similar to the well-known analogous inequality on the real line, see e.g. \cite[4.9.6 (36)]{Timan}. For the exact version below, see Lemma 1 of \cite{GR-2} .

\begin{lemma}\label{l:Nikolskii} For any $q>0$ and any polynomial of degree at most $n$ we have that
\begin{equation*}
\|p\|_{L^q(\DK)} \ge  \left(\frac{d}{2(q+1)}\right)^{1/q} ~\|p\|_{L^\infty(\DK)} ~ n^{-2/q} .
\end{equation*}
\end{lemma}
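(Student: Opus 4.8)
The strategy is to reduce the problem, via the decomposition of the boundary $\partial K$ into its finitely many sides, to a contribution coming from a neighbourhood of the (finitely many) acute vertices, since at all non-acute vertices and along the relative interiors of the sides the local depth $h(\zeta,K)$ is bounded below by a positive constant depending only on $K$, and there Theorem \ref{th:localdepth} already yields $|p'(\zeta)|\ge c_K n |p(\zeta)|$ pointwise on the set $\HH(p)$. Concretely, I would first fix a small $\rho=\rho(K)>0$ so that the $\rho$-neighbourhoods of distinct vertices are disjoint and so that outside the union $U$ of the $\rho$-balls around the acute vertices one has $h(\zeta,K)\ge h_0>0$; then on $\partial K\setminus U$ Theorem \ref{th:localdepth} gives, after raising to the $q$-th power and integrating over $(\partial K\setminus U)\cap \HH(p)$,
\begin{equation*}
\int_{(\partial K\setminus U)\cap\HH(p)} |p'(\zeta)|^q\,|d\zeta| \ge \left(\frac{h_0^4}{1500\,d^5}\right)^q n^q \int_{(\partial K\setminus U)\cap\HH(p)} |p(\zeta)|^q\,|d\zeta|.
\end{equation*}
The point of intersecting with $\HH(p)$ is that, by the Nikolskii-type Lemma \ref{l:Nikolskii} together with the definition \eqref{eq:Hsetdef} of $\HH$, the part of $\|p\|_{L^q(\partial K)}^q$ carried by $\Gamma\setminus\HH(p)$ is at most a fixed fraction of the whole, so it suffices to control the $L^q$ mass of $p$ on $\HH(p)$, and in fact on $\HH(p)\cap(\partial K\setminus U)$ unless a definite proportion of that mass sits inside $U$.

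\textbf{The acute-vertex contribution.} The heart of the argument is therefore the case where a nontrivial portion of $\|p\|_{L^q(\partial K)}^q$ is concentrated on $U$, i.e.\ near an acute vertex $v$ of interior angle $\alpha<\pi/2$. Here I would adapt the ``tilted normal estimate lemma'' of \cite{Rev2}: instead of using the genuine inward normal at a boundary point $\zeta$ near $v$ (which, for a point on a side meeting $v$, produces only a short chord and hence a weak bound), one uses a \emph{tilted} chord through $\zeta$ — a segment inside $K$ emanating from $\zeta$ in a direction chosen depending on which side $\zeta$ lies on and on $\alpha$ — whose length is bounded below by a constant multiple of $d(K)$ (or at least bounded below uniformly on $U$). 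Along such a chord, write $p(z)=\prod_{j=1}^n(z-z_j)$ and estimate $\Re\big(e^{i\psi}p'(\zeta)/p(\zeta)\big)=\sum_j \Re\big(e^{i\psi}/(\zeta-z_j)\big)$ for a suitable direction $\psi$: each zero $z_j\in K$ contributes nonnegatively provided the chord direction is chosen so that $K$ lies in the half-plane on the correct side, and one extracts a lower bound of order $n/(\text{chord length})\gtrsim n/d$ from the integral representation (the Chebyshev/transfinite-diameter input, Lemmas \ref{l:transfinitediam}--\ref{l:transfinitediammeasure}, enters exactly here, as in Er\H od's and R\'ev\'esz's arguments, to guarantee that not too many zeros can cluster so as to cancel the contribution — more precisely to control $|p|$ from below on a subarc by comparison with $\Delta$ of the relevant set). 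The delicate part is that near an acute vertex the ``good'' tilted direction for points on one side of $v$ is nearly opposite to the good direction for points on the other side, so one cannot use a single global tilt; the remedy is to split $U\cap\partial K$ into the two sub-edges at $v$ and treat each with its own tilt, then combine. One must also verify that the tilted chord actually stays inside $K$ for \emph{all} $\zeta$ in the sub-edge up to distance $\rho$ from $v$, which forces $\rho$ to be taken small in terms of $\alpha$ and the geometry; this is where convexity of $K$ is used decisively.

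\textbf{Assembling the estimate and the role of $n_0$.} After these two regimes are handled, I would choose the threshold: set $\theta\in(0,1)$ a small absolute constant; if $\int_{\Gamma\setminus U}|p|^q \ge \theta \|p\|_{L^q(\partial K)}^q$ use the depth bound on $\partial K\setminus U$; otherwise $\int_{U}|p|^q$ dominates and we use the tilted estimate on (the relevant sub-edges of) $U$. In both cases we obtain $\|p'\|_{L^q(\partial K)}^q \ge c_K^q n^q \cdot (\text{const})\cdot \|p\|_{L^q(\partial K)}^q$ after also discarding, via Lemma \ref{l:Nikolskii}, the mass outside $\HH(p)$ — note that $\HH(p)$ is defined with the cutoff $\lambda n^{-2/q}\|p\|_\infty$ precisely so that Lemma \ref{l:Nikolskii} guarantees $\|p\|_{L^q(\Gamma\setminus\HH)}^q \le \frac12 \|p\|_{L^q(\Gamma)}^q$ (with $\lambda$ as in \eqref{eq:Hsetdef}). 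The hypothesis $n\ge n_0(K)$ absorbs: (i) the $n_0$ required by Theorem \ref{th:localdepth}; (ii) the requirement that the ``localisation near a vertex'' be meaningful, i.e.\ that a polynomial cannot have all its $L^q$ mass so sharply concentrated at a single point that even the $\rho$-neighbourhood does not see a definite proportion — this is again a Nikolskii/Chebyshev-type statement requiring $n$ large relative to $d/\rho$; and (iii) the smoothing of any boundary effects in the tilted estimate.

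\textbf{Main obstacle.} I expect the genuinely hard step to be making the tilted-normal estimate quantitatively work uniformly over a whole sub-edge adjacent to an acute vertex: one needs a single tilt direction (per sub-edge) for which simultaneously (a) the chord from every $\zeta$ on that sub-edge in that direction remains in $K$ and has length $\gtrsim d$, and (b) the resulting sum $\sum_j\Re(e^{i\psi}/(\zeta-z_j))$ admits a clean lower bound of order $n/d$ valid for \emph{all} admissible zero configurations. Near a sharp vertex these two requirements pull against each other — geometric admissibility of the chord wants the tilt close to the side's inward normal, while the positivity of all zero-contributions wants the tilt aligned so that $K$ sits in a fixed half-plane — and reconciling them (likely by a further subdivision of the sub-edge into a ``bulk'' part handled by the tilted chord and a ``corner'' part of size $o(\rho)$ whose total $L^q$ contribution is negligible, again by Lemma \ref{l:Nikolskii} applied at scale $\approx \rho$) is the crux, exactly as the introduction warns.
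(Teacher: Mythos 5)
Your proposal does not prove the statement at hand. Lemma~\ref{l:Nikolskii} is a Nikolskii-type comparison of the $L^q(\partial K)$ and $L^\infty(\partial K)$ norms of an \emph{arbitrary} polynomial of degree at most $n$: it involves no derivative, no hypothesis on the location of the zeros, and no Tur\'an-type lower bound. What you have sketched instead is the architecture of the \emph{main theorem} of the paper (the splitting of $\partial K$ into a deep part handled by Theorem~\ref{th:localdepth} and neighbourhoods of acute vertices handled by a tilted-normal estimate), i.e.\ a proof of $\|p'\|_{L^q}\ge c_K n\|p\|_{L^q}$ for $p\in\PK$. Worse, your argument repeatedly \emph{invokes} Lemma~\ref{l:Nikolskii} as an ingredient (to discard the mass on $\Gamma\setminus\HH$ and to localise near a vertex), so as a proof of that lemma it is circular on its face.

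What is actually needed is an elementary local argument, entirely independent of the machinery you describe (the paper itself only cites Lemma~1 of \cite{GR-2} for it). The standard route: let $M:=\|p\|_{L^\infty(\DK)}$ be attained at $\zeta_0\in\Gamma$; a Markov-type \emph{upper} bound for $\|p'\|_K$ on the convex body $K$ (with the exponent $n^2$, e.g.\ via Pommerenke's inequality $\|p'\|_K\le \tfrac{e}{2}\,n^2\,\capa(K)^{-1}\|p\|_K$ together with $\capa(K)\ge d/4$), or equivalently a Chebyshev-polynomial comparison along a chord, shows that $|p(\zeta)|$ remains at least a definite fraction of $M$ on a boundary arc emanating from $\zeta_0$ of length of order $d/n^2$ (such an arc exists since the perimeter of $K$ is at least $2d$). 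Integrating $|p|^q$ over that arc yields $\|p\|_q^q\ge c\,\tfrac{d}{n^2}\,M^q$ with the explicit constant $\tfrac{1}{2(q+1)}$ after optimising the decay estimate. None of the zero-distribution, transfinite-diameter, or tilted-chord considerations in your write-up enter; you need to supply this (much shorter) argument from scratch.
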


Recall the above definition \eqref{eq:Hsetdef}. It is important for us  because we can restrict ourselves to the points of $\HH$
and neglect whatever happens for points belonging to its complement $\Gamma \setminus \HH$. Indeed, $\Gamma$ is contained in a disk of radius $d$ around any point of $K$, whence by the well-known property\footnote{A reference is \cite[p. 52, Property 5]{BF} about surface area,
presented as a consequence of the Cauchy formula for surface area.} of convex curves,
$|\Gamma|\le 2\pi d$, and the above Lemma \ref{l:Nikolskii} furnishes
$$
\int_{\Gamma\setminus \HH} |p|^q \le
  \frac{ 2\pi d \lambda^q}{ n^{2} } \|p\|^q_\infty \le 4\pi(q+1) \lambda^q  \|p\|_q^q =
  \frac{1}{2} \|p\|_q^q ,
$$
so that we find
$$
\int_{\HH} |p|^q  = \int_{\Gamma} |p|^q   - \int_{\Gamma\setminus \HH} |p|^q  \ge \|p\|_q^q - \frac{1}{2} \|p\|_q^q \ge \frac{1}{2} \|p\|_q^q.
$$
Therefore we can restrict to (lower) estimations of $|p'(\ze)|$ on the set $\HH$ where $p$ is assumed to be relatively large (compared to its maximum norm), so that we can assume that
$$
\begin{aligned}
\log\dfrac{\|p\|_\infty}{|p(\ze)|}& \le \log ( \lambda^{-1} n^{2/q} ) =
\frac{\log (1+q)}{q} + \frac{\log\left(8\pi\right)}{q}
+ \frac2{q} \log n \\
&\le \log(16\pi) + 2 \log n < 4\log n
\end{aligned}
$$
for all $q\ge 1$ and $n \ge \sqrt{16\pi}$, i.e. already for $n\ge 8$. In case $0<q<1$, a similar estimate with a factor $1/q$ obtains even more easily. Summing up we have

\begin{lemma}
Let $n \in \NN$ and $0<q<\infty$. Let $p \in \PP_n$ and $\HH=\HH(p) \subset \DK$ be defined according to \eqref{eq:Hsetdef}. Then  we have
\begin{equation*}
\int_{\HH} |p|^q \geq \frac12 \|p\|^q_{L^q(\DK)}.
\end{equation*}
Furthermore, for any point $\ze \in \HH$ we also have
\begin{equation*}
 \log \frac{\|p\|_\infty}{|p(\ze)|} \le \frac4{\min(1,q)} \log n ~~ ( n\ge 8).
\end{equation*}
\end{lemma}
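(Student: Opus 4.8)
The plan is to obtain both assertions directly from the Nikolskii-type estimate of Lemma \ref{l:Nikolskii}, the defining inequality of $\HH(p)$ in \eqref{eq:Hsetdef}, and the elementary fact that the convex curve $\Gamma=\DK$, being contained in a disk of radius $d$, has length $|\Gamma|\le 2\pi d$ (Cauchy's formula for the perimeter of a convex body). For the mass estimate I would split $\|p\|_{L^q(\DK)}^q=\int_{\HH}|p|^q+\int_{\Gamma\setminus\HH}|p|^q$ and bound the second term: on $\Gamma\setminus\HH$ the definition \eqref{eq:Hsetdef} gives $|p(\ze)|\le\lambda n^{-2/q}\|p\|_\infty$, so
\[
\int_{\Gamma\setminus\HH}|p|^q\le |\Gamma|\,\lambda^q n^{-2}\|p\|_\infty^q\le 2\pi d\,\lambda^q n^{-2}\|p\|_\infty^q.
\]
Inserting Lemma \ref{l:Nikolskii} in the equivalent form $\|p\|_\infty^q\le \tfrac{2(q+1)}{d}\,n^2\,\|p\|_{L^q(\DK)}^q$ turns the right-hand side into $4\pi(q+1)\lambda^q\|p\|_{L^q(\DK)}^q$; and since $\lambda$ in \eqref{eq:Hsetdef} was chosen precisely so that $\lambda^q=\tfrac1{8\pi(q+1)}$, this equals $\tfrac12\|p\|_{L^q(\DK)}^q$. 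Hence $\int_{\Gamma\setminus\HH}|p|^q\le\tfrac12\|p\|_{L^q(\DK)}^q$, and subtracting from $\|p\|_{L^q(\DK)}^q$ gives $\int_{\HH}|p|^q\ge\tfrac12\|p\|_{L^q(\DK)}^q$.

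For the log-ratio bound I would just take logarithms in the defining inequality: for $\ze\in\HH$ one has $|p(\ze)|>\lambda n^{-2/q}\|p\|_\infty$, hence
\[
\log\frac{\|p\|_\infty}{|p(\ze)|}<\frac2q\log n+\frac1q\log\bigl(8\pi(q+1)\bigr),
\]
and it remains only to absorb the constant term into $\tfrac4{\min(1,q)}\log n$ for $n\ge 8$. If $q\ge1$ then $\min(1,q)=1$, $\tfrac2q\le2$, and $\tfrac1q\log(8\pi(q+1))\le\log(16\pi)$; the last inequality is $8\pi(q+1)\le(16\pi)^q$, which holds with equality at $q=1$ and then for $q>1$ since the convex function $q\mapsto(16\pi)^q$ dominates the linear function $q\mapsto8\pi(q+1)$ that agrees with it at $q=1$ but has smaller derivative there. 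So the bound is at most $2\log n+\log(16\pi)<4\log n$ once $n^2>16\pi$, i.e. for $n\ge8$. If $0<q<1$ then $\min(1,q)=q$, and it suffices that $\tfrac1q\log(8\pi(q+1))\le\tfrac2q\log n$, i.e. $8\pi(q+1)<16\pi\le n^2$, again valid for $n\ge8$; the factor $4/\min(1,q)$ is exactly what makes the two regimes uniform.

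I do not expect a real obstacle: the argument reduces in each case to one line of manipulation, and the only mildly technical point is the numerical inequality $8\pi(q+1)\le(16\pi)^q$ for $q\ge1$, which in any case can be sidestepped at the cost of replacing $4$ by a somewhat larger absolute constant.
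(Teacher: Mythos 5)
Your proposal is correct and follows essentially the same route as the paper: the mass estimate is obtained by bounding $\int_{\Gamma\setminus\HH}|p|^q$ via the defining inequality of $\HH$, the perimeter bound $|\Gamma|\le 2\pi d$, and Lemma \ref{l:Nikolskii}, with $\lambda$ calibrated so that the bound is exactly $\tfrac12\|p\|_q^q$; the logarithmic bound is likewise the paper's direct computation from \eqref{eq:Hsetdef}. Your verification of $8\pi(q+1)\le(16\pi)^q$ for $q\ge1$ fills in a step the paper leaves implicit, but the argument is the same.
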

Note that for $q\ge 1$ this was given exactly as here already in Lemma 2 of Section 5.3 of \cite{GR-2}, so here we repeated the easy deduction for the reader's convenience only.

\section{Polygonal convex domains}

In the following we denote $D_r(z_0):=\{ z \in \CC~:~ |z-z_0|\le r\}$. As a key step towards the proof of Theorem \ref{th:polygon}, we want to prove in this section the following partial result.
\begin{theorem}\label{th:acute-alpha}
Let $K\Subset \CC$ be a convex polygonal domain, $0<q<\infty$, and $n\in \NN$.
Let $U$, $V$, $W$ be consecutive (in the counter-clockwise direction) vertices of $K$,
and assume that $\alpha:= \angle(UVW)$ is an acute angle of $K$ at $V$, i.e.  $0<\alpha<\pi/2$.

Then there exists a positive constant $\mu(\alpha):=\mu(\alpha,q)>0$ with the following property.
For any $p \in \Pn(K)$ and for any $0 < r \le R_V/8$, where we put
\begin{equation*}
R_V:= \dfrac{1}{64}\min(|VU|, |VW|),
\end{equation*}
it holds
\begin{equation}\label{acutepprimep}
\mu(\alpha) ~n^q  \int_{\HH\cap D_{r}(V)} |p|^q  \le
\int_{\Gamma\cap D_{8r}(V)} |p'|^q .
\end{equation}
\end{theorem}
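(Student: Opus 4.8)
The plan is to work near the acute vertex $V$ and exploit the "tilted normal" idea from \cite{Rev2}: although the two sides $VU$ and $VW$ meeting at $V$ are segments on $\partial K$ (so ordinary normal estimates degenerate there), a chord that crosses $K$ through a point $\zeta$ near $V$ in a direction \emph{tilted away} from the bisector of $\alpha$ will exit through a \emph{different} side of the polygon, on which $|p|$ can be controlled from below by Chebyshev's inequality (Lemma \ref{l:transfinitediam}) and hence $|p'|$ estimated from below on a set of boundary points of positive length. Concretely, place $V$ at the origin with the angle bisector along the positive real axis, so the two sides near $V$ lie along rays at angles $\pm\alpha/2$. For $\zeta \in \HH \cap D_r(V)$ lying on (say) the side in direction $e^{i\alpha/2}$, I would consider the family of chords through $\zeta$ with direction $e^{i\psi}$ for $\psi$ ranging over a fixed sub-arc of $(\alpha/2,\pi-\alpha/2)$ of length comparable to $\pi-\alpha$; each such chord has length $\gtrsim \sin(\text{something})\cdot|\zeta|$ bounded below in terms of $\alpha$, and — provided $r\le R_V/8$ with $R_V$ as defined — its far endpoint lands on the part of $\partial K$ outside $D_{8r}(V)$ but still within a controlled distance, so that $|p'|$ evaluated along that endpoint-locus contributes to $\int_{\Gamma\cap D_{8r}(V)}|p'|^q$.

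The engine converting "$|p|$ is not too small at $\zeta$" into "$|p'|$ is large somewhere on the chord" is the standard logarithmic-derivative/Chebyshev argument: writing $p(z)=a\prod(z-z_j)$ with all $z_j\in K$, on a chord segment $[\zeta,\eta]$ of length $L$ the factorization gives, for a suitable point $\xi$ on the segment (chosen via Lemma \ref{l:transfinitediammeasure} applied to the real parameter set where $|p|$ along the chord exceeds a threshold), the bound $|p(\xi)|\ge (\Delta/4)^n$-type control, and then a mean-value / Markov-on-a-segment estimate yields $|p'|\gtrsim (n/L)|p|$ at a point, or better, on a subset of the chord of length $\gtrsim L/n$. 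Integrating $|\psi|$ over the admissible tilt range and using Fubini to pass from the family of chords (parametrized by $\psi$) to the planar region $\Gamma\cap D_{8r}(V)$ they sweep out, one collects a factor $n^q$ against $|p(\zeta)|^q$; the factor $\log(\|p\|_\infty/|p(\zeta)|)\le \frac{4}{\min(1,q)}\log n$ from Lemma \ref{l:Hlogp} is what forces the restriction to $\HH$ and is absorbed harmlessly since it is only logarithmic while the gain is a genuine power of $n$. Finally integrating over $\zeta\in\HH\cap D_r(V)$ (again Fubini, now in $\zeta$) produces \eqref{acutepprimep} with $\mu(\alpha)=\mu(\alpha,q)$ depending on $\alpha$ through the length of the admissible tilt arc and the lower bound on chord lengths, and on $q$ through the constants $\lambda$ and the $1/\min(1,q)$ factor.

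The main obstacle — and the reason "essential changes are necessary" over \cite{Rev2} — is the bookkeeping of the two Fubini changes of variables: one must verify that as $\zeta$ ranges over $\HH\cap D_r(V)$ and $\psi$ over its tilt interval, the far endpoints of the chords cover $\Gamma\cap D_{8r}(V)$ (or a fixed proportion of it) with Jacobian bounded below, \emph{and} that these endpoints genuinely stay on $\partial K$ within $D_{8r}(V)$ rather than escaping to a far side — this is exactly what the choices $R_V=\frac1{64}\min(|VU|,|VW|)$ and $r\le R_V/8$, and the dilation factor $8$ in $D_{8r}(V)$, are calibrated to guarantee. A secondary technical point is the small-$n$ regime and the threshold in the Chebyshev step: one needs $n$ large enough (hence $n\ge n_0(K)$, with $n_0$ depending on the polygon through $\min(|VU|,|VW|)$ and $\alpha$) for the "length $\gtrsim L/n$ subset of the chord where $|p'|$ is large" argument to leave room inside the relevant annulus, and for Lemma \ref{th:localdepth} / Lemma \ref{l:Nikolskii} to be applicable. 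Once Theorem \ref{th:acute-alpha} is in hand, Theorem \ref{th:polygon} follows by covering $\partial K$ with the neighborhoods $D_r(V)$ of the acute vertices (handled here) together with the complementary part of $\partial K$, which has positive local depth bounded below and is therefore covered by Theorem \ref{th:localdepth}.
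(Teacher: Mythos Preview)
Your proposal has a genuine gap: it misses the paper's key structural lemma (Lemma~\ref{l:oneside}), which establishes that on at least \emph{one} of the two sides $I_\pm$ near the acute vertex $V$, the pointwise estimate $|p'(z)|\ge \frac{\sin\alpha}{8d}\,n\,|p(z)|$ already holds everywhere. This is proved by a zero-counting argument: if $\ge n/2$ zeroes lie in the shrunken copy $\frac{1}{64}K$, then $|p|$ is so small near $V$ that $\HH\cap(I_+\cup I_-)=\emptyset$ and there is nothing to prove; otherwise $\ge n/4$ zeroes lie in one half of $K\setminus\frac{1}{64}K$, and the standard Tur\'an imaginary-part estimate of $p'/p$ gives the pointwise bound on the opposite side. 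The tilted chords are then used in a quite different way from what you sketch: they run from a point $s$ on the \emph{bad} side $I_-$ across the corner to the \emph{good} side $I_+$ (both sides lying \emph{inside} $D_{8r}(V)$, not outside it), and the Tilted Normal Estimate (Lemma~\ref{l:TNE}) is applied contrapositively: if $|p'(s)|<\kappa\, n\,|p(s)|$, then on a definite-measure subset of $[T(s),D(s)]\subset I_+$ one must have $|p|\ge|p(s)|$. A disjoint-cover construction (sets $Q_\ell\subset J_\ell^c$) then transfers $\int_S|p|^q$ from the bad-side exceptional set $S$ to $\int_J|p|^q$ on the good side, after which the good-side pointwise bound converts it to $\int_J|p'|^q$.

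Your mechanism, by contrast, tries to produce large $|p'|$ directly from a mean-value/Markov argument along each chord; but that yields $|p'|$ large at an \emph{interior} point of the chord, not on $\Gamma$, so it cannot feed $\int_{\Gamma\cap D_{8r}(V)}|p'|^q$. Your geometry is also inconsistent: you state that the chord endpoints land \emph{outside} $D_{8r}(V)$ yet contribute to the integral over $\Gamma\cap D_{8r}(V)$. The Fubini/Jacobian bookkeeping you outline has no way to manufacture boundary control of $|p'|$ without first knowing that one side is already good --- and that is exactly the content of Lemma~\ref{l:oneside}, which your sketch omits entirely.
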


\subsection{On one side of the acute angle, $|p'(z)|\gg n |p(z)|$ holds true}

\begin{lemma}\label{l:oneside}
With the notations of Theorem \ref{th:acute-alpha}
let us define the segments
\begin{equation}\label{newIplusminusF}
I_+=[V,U]\cap D_{R_V}(V), \quad I_{-}=[V,W]\cap D_{R_V}(V).
\end{equation}
Then we have for any $p\in \PK$ that either $(I_{+}\cup I_{-}) \subset \Gamma \setminus \HH $, or at least for one of the two segments $I_{+}$ or $I_{-}$ we have for all points $z\in I_{\pm}$ of the relevant segment the inequality
\begin{equation}\label{onesidecn}
|p'(z)| \ge \frac{\sin (\alpha/2)}{4 d} n |p(z)|
\ge \frac{\sin \alpha}{8d} n |p(z)| \qquad (z \in I_{\pm}).
\end{equation}
\end{lemma}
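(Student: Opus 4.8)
The plan is to exploit the standard logarithmic derivative identity together with the acute-angle geometry. Write $p(z)=a\prod_{k=1}^n (z-z_k)$ with all $z_k\in K$, so that for $z\in\Gamma$ with $p(z)\neq 0$ we have $p'(z)/p(z)=\sum_{k=1}^n 1/(z-z_k)$. On a boundary segment $I_\pm$ lying along one side of the polygon, $K$ lies entirely in a closed half-plane bounded by the line through that side; hence if we let $\mathbf{n}_\pm$ denote the \emph{inner} unit normal to $\Gamma$ along $I_\pm$, every zero $z_k$ satisfies $\Re\big(\overline{\mathbf{n}_\pm}(z_k-z)\big)\ge 0$, and therefore $\Re\big(\overline{\mathbf{n}_\pm}/(z-z_k)\big)=\Re\big(\overline{\mathbf{n}_\pm}\,\overline{(z-z_k)}/|z-z_k|^2\big)\le 0$ — so the real part of $\mathbf{n}_\pm\cdot p'(z)/p(z)$ is a sum of $n$ non-positive terms. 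The idea is that at least a positive proportion of these terms must be bounded \emph{away} from zero for at least one of the two sides, because the two sides $[V,U]$ and $[V,W]$ meet at the acute angle $\alpha$ at $V$, and a zero $z_k$ cannot be simultaneously ``close to the direction of both sides'' from a point near $V$.

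The key steps, in order, are as follows. First I would fix an arbitrary $z\in I_+\cup I_-$ and split the zeros into those lying in $D_{R_V}(V)$ — call this the ``near'' set — and those outside. Second, for the ``far'' zeros one gets a clean bound: if $|z_k-V|\ge R_V$ and $|z-V|\le R_V$, then $|z-z_k|\le |z-V|+|z_k-V|$, and combined with $|z-z_k|\ge |z_k-V|-|z-V|$ and the smallness of $r$ relative to $R_V$ we obtain $|z-z_k|\le 2d$ hence $|1/(z-z_k)|\ge 1/(2d)$, with the right sign contribution $\Re(\overline{\mathbf n_\pm}/(z-z_k))\le 0$ always; more importantly the \emph{angular} location of $z_k$ as seen from $z$ is then controlled. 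Third — and this is where the dichotomy enters — I would argue that at most a small fraction (say fewer than $n/2$) of \emph{all} zeros can fail to give a contribution of size $\ge c\sin(\alpha/2)/d$ to $\Re(\overline{\mathbf n_+}\,p'/p)$ for $z\in I_+$ \emph{and also} fail for $\Re(\overline{\mathbf n_-}\,p'/p)$ for $z\in I_-$: a zero in the narrow wedge near $V$ of half-opening $\alpha/2$ around the side $[V,U]$ is then at angular distance $\ge\alpha/2$ from the direction of $[V,W]$, giving the required contribution \emph{there}, and symmetrically. A pigeonhole / counting argument on which side ``captures'' at least half the zeros then yields \eqref{onesidecn} for that side, with the stated constant $\sin(\alpha/2)/4$; the second inequality in \eqref{onesidecn} is just $\sin(\alpha/2)\ge\sin\alpha/2$ together with $1\le d$ (or rather $\sin(\alpha/2)/4\ge \sin\alpha/(8d)$ after absorbing the diameter normalization coming from the far-zero bound $1/(2d)$).

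The main obstacle I anticipate is making the dichotomy uniform in the choice of $z$ along the whole segment $I_\pm$ simultaneously: the ``good'' side must work for \emph{every} point $z\in I_\pm$, not just one, so the wedge/counting argument has to be set up with the vertex $V$ (not a moving base point $z$) as the reference, using that $R_V$ is chosen small enough — a factor $1/64$ of the side lengths — that for every $z\in I_\pm$ and every near zero the geometry is essentially the same as seen from $V$. One must also handle the degenerate escape clause: if neither side works, it can only be because $p$ is uniformly tiny on $I_+\cup I_-$, i.e. $(I_+\cup I_-)\subset \Gamma\setminus\HH$; concretely, if on both segments the real-part sum fails to be large at some point, one shows too many zeros cluster near $V$ in both wedges, which forces $|p|$ to be small there by the factorization $|p(z)|\le \|p\|_\infty \prod (|z-z_k|/\mathrm{something})$ — more precisely, a large number of zeros within distance $R_V\le d/64$ of $z$ makes $\prod_k |z-z_k|$ fall below the threshold $\lambda n^{-2/q}\|p\|_\infty$ defining $\HH$, so $z\notin\HH$. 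Checking that these two alternatives are genuinely exhaustive, with the quantitative thresholds matching the definition \eqref{eq:Hsetdef} of $\HH$, is the delicate bookkeeping part of the argument.
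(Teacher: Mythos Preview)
Your overall architecture matches the paper's: take the imaginary part of $e^{i\beta}p'/p$ along the tangent direction, observe each term has the right sign, and run a pigeonhole on which side of the acute corner ``sees'' at least $n/4$ zeros at a good angle, with an escape clause when too many zeros crowd near $V$. The paper's execution is cleaner than your sketch: it normalises so that the angle \emph{bisector} at $V$ is the real axis, and the dichotomy is simply (i) at least $n/2$ zeros lie in the homothetic copy $K':=\tfrac{1}{64}K$, or (ii) fewer than $n/2$ lie there, hence at least $n/4$ of the remaining zeros lie in one of the two half-planes determined by the bisector. In case (ii), say with $\ge n/4$ zeros $z_j$ in the lower half-plane, one checks that for \emph{every} $z\in I_+$ each such $z_j=z+r_je^{i\varphi_j}$ has $\varphi_j\in(-\pi/2,0)$, so $\sin(\beta-\varphi_j)\ge\sin\beta$ with $\beta=\alpha/2$, and $r_j\le d$; summing gives $|p'/p|\ge(n/4)\sin\beta/d$. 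This global bisector split, independent of $z$, is exactly what resolves the uniformity obstacle you correctly flagged.

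There is, however, a genuine gap in your escape clause. You write that many zeros within distance $R_V$ of $V$ make $\prod_k|z-z_k|$ ``fall below the threshold $\lambda n^{-2/q}\|p\|_\infty$''. But $\prod_k|z-z_k|=|p(z)|/|a|$ with $a$ the leading coefficient, and clustering of zeros only makes this \emph{absolutely} small; to get $|p(z)|/\|p\|_\infty$ small you must also bound $\|p\|_\infty/|a|$ from \emph{below}. This is precisely where the paper invokes Chebyshev's inequality (Lemma~\ref{l:transfinitediam} together with Lemma~\ref{l:transfinitediammeasure}): a diametral segment $I\subset K$ gives $\|p\|_\infty\ge|a|\,\Delta(I)^n=|a|(d/4)^n$, while $m\ge n/2$ zeros inside $K'$ (diameter $d/64$) force $|p(z)|\le|a|(d/64)^{n/2}d^{n/2}$ for $z\in K'\supset I_+\cup I_-$, so $|p(z)|/\|p\|_\infty\le 2^{-n}$, which beats the $\HH$-threshold once $n\ge n_0(q)$. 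Your proposed mechanism ``$|p(z)|\le\|p\|_\infty\prod(|z-z_k|/\text{something})$'' does not yield this comparison; without the transfinite-diameter lower bound the two alternatives are not exhaustive and the proof does not close.
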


Note that Lemma 5 of \cite{GR-3} is a somewhat  similar statement  in a different context.

\begin{proof}
Without loss of generality we may assume that $V=0$ and that the segments $I_+$ and $I_-$ are in conjugate directions, that is, with $\beta:=\alpha/2$, we have
\begin{equation}\label{newIplusminus}
I_{\pm}:=\left[0, \,    R_V e^{\pm\beta i}\right].
\end{equation}
Let us consider the homothety about $V=0$ and of ratio $1/64$. It brings $K$ to $K'=\dfrac1{64}K$,
whose  boundary includes $I_\pm$.
By convexity, $K'\subset K$, and for the diameters $d':=d(K')=\frac{1}{64}d(K)$.

Suppose  first that some $p\in \PK$ has  $m\ge n/2$ zeroes in $K'$. Without loss of generality we may assume that $p$ is monic.
We have for any point
$z\in K'$ the estimate $|p(z)|  \le (d')^{m}d^{n-m}\le (d')^{n/2}d^{n/2}.$
To estimate from the other side, we take  any segment $I \subset K$ of the length $d$
and apply  Lemma~\ref{l:transfinitediam}, and  Lemma~\ref{l:transfinitediammeasure}:
$\|p\|_K \ge \|p\|_I \ge \Delta(I)^n \ge (d/4)^n.$
After dividing these two estimates we get
$$
\dfrac{|p(z)|}{\|p\|_{K}}\le \frac{(d')^{n/2}d^{n/2}}{(d/4)^n} = \left(\dfrac{d'}{d}\right)^{n/2}4^n= \left(\dfrac{1}{64}\right)^{n/2}4^n= 2^{-n}.
$$
for all $z\in K'$, in particular also on both segments \eqref{newIplusminus}. Whence for $n\ge n_0$ ($n_0$ is necessary for $2^{-n} <\lambda n^{-2/q}$ with $\lambda<1$ holds only for $n\ge n_0$) we must have $I_{\pm}\cap \HH =\emptyset$.

Denote $\HP:=\{z\in \CC~:~ \Re z\ge 0\}$ the upper halfplane. Let now assume that $\HH\cap (I_{+}\cup I_{-} )\ne \emptyset$. It follows that there are $m<n/2$ zeroes of $p$ in $K'$, whence there are either $\ge n/4$ zeroes in $\HP\cap (K\setminus K')$, or there are $\ge n/4$ zeroes in $-\HP\cap (K\setminus K')$.

Write $\Z=\{z_1,\ldots,z_n\}$ for the set of zeroes of $p$, all in $K$ by assumption. By symmetry, it suffices to deal with one case, say when $\W:=\Z \cap (-\HP\cap (K\setminus K'))$ contains $\ge n/4$ zeroes. Fix any point $z\in I_{+}$, with tangent direction $e^{i\beta}$, and write for all $z_j\in \Z$ $z_j=z+r_je^{i\varphi_j}$ ($j=1,\ldots,n$).
Then the usual Tur\'an type estimate yields\footnote{Note that, here again, for any $z_j\in \Z \subset K$ the angle $\varphi_j\in [-\pi+\beta,\beta]$, whence $\beta-\varphi_j \in [0,\pi]$ and $\sin (\beta-\varphi_j) \ge 0$, therefore any term can be estimated from below by $0$ and hence can be dropped from the full sum.}
\begin{equation*}
\left| \frac{p'}{p}(z) \right| \ge \Im \left(-{e^{i\beta}\frac{p'}{p}(z)}\right) =
\sum_{z_j\in\Z} \Im \frac{e^{i\beta}}{z_j-z} =\sum_{z_j\in\Z} \frac{\sin (\beta-\varphi_j)}{r_j} \ge \sum_{z_j\in\W} \frac{\sin (\beta-\varphi_j)}{r_j}.
\end{equation*}
For one thing, it is clear that $r_j\le \diam(K)=d$. Also, by construction, clearly $-\pi/2 < \varphi_j <0$ for any $z_j\in \W$, whence for such zeroes
$\beta -\varphi_j\in (\beta, \beta+\pi/2)
\subset (\beta, \pi-\beta)$ and $\sin(\beta-\varphi_j)\ge \sin \beta$. These considerations thus yield for any $z\in I_{+}$
$$
\left| \frac{p'}{p}(z) \right| \ge \# \W \frac{\sin \beta}{d} \ge \frac{\sin \beta}{4d} n = \frac{\sin (\alpha/2)}{4d} n \ge \frac{\sin \alpha}{8d} n .
$$

\end{proof}

\subsection{A crucial step}
Here we prove the following key lemma.

\begin{lemma}[{\bf Tilted normal estimate}]\label{l:TNE}
Let $K\Subset \CC$ be a convex polygonal domain,
$U,$ $V,$ $W>0$ be consecutive vertexes of $K$ (in the
counter-clockwise direction), $$0<\alpha:=\angle(U,V,W) <\pi/2$$ be the acute angle at $V,$ and put
\begin{equation}\label{theta}
\theta:=\dfrac12 \arcsin \left(\dfrac{1}{8} \sin \alpha\right)
\quad  (which \ means \ that  \ 8 \sin (2\theta) = \sin \alpha).
\end{equation}
Suppose $\zeta\in (V,W)$, and that $|V\ze| \le |VU|/8$. Let $D=D(\zeta)$ be
the point of intersection\footnote{The intersection point exists \emph{inside the interval $(V,U)$}, for according to the sine theorem $f(\alpha):=|VD|:|V\ze|=\sin(\pi/2-2\theta) : \sin(\pi/2-\alpha+2\theta)=\cos (2\theta) : \cos(\alpha-2\theta)$, and therefore $f(\alpha) \le 1/\cos(\alpha-2\theta)$, which is strictly increasing with $\alpha$ and $2\theta:=\arcsin (\sin(\alpha)/8)$, and reaches its maximum at $\alpha=\pi/2$ with a value $1/\cos(\pi/2-\arcsin(\sin(\pi/2)/8))=1/\sin(\arcsin(\sin(\pi/2)/8))=8$. That is why we assumed $r < R_V/8$ in Theorem \ref{th:acute-alpha}.} of $(V,U]$ and the ray
$$
\zeta+ t e^{i (\pi/2+\arg(VW)+2\theta)}, \quad t\ge 0,
$$
and $T=T(\zeta)$ be the point of intersection of $(V,U]$ and the ray
$$\zeta+ t e^{i (\pi/2+\arg(VW)+3\theta)}, \quad t\ge 0.$$
Let $\omega \in (0,8/e)$ be a positive parameter.

Then for any compact, linearly Jordan measurable set $J\subset [T,D]$, with its measure
\begin{equation}\label{J}
|J| \ge \omega |D\zeta|,
\end{equation}
and
$p\in {\mathcal P}_n(K)$ we have
$$
\left| \frac{p'}{p}(\ze)\right|  \ge \dfrac{\sin \theta}{7.5 \log (8/\omega) }
\left( n \frac{\sin\theta}{d} -\frac{2}{|T\zeta|} \log \frac{\|p\|_J}{|p(\zeta)|} \right).
$$

Consequently, if $|p(\zeta)|\ge \|p\|_J$, then
\begin{equation}\label{Tiltedestimate}
\left| \frac{p'}{p}(\ze)\right|  \ge
\dfrac{ \sin^2 \theta}{7.5 d \log (8/\omega)} \,n
> \dfrac{\sin^2 \alpha}{2000 d \log (8/\omega)}\, n =: C(\alpha,\omega)\, n.
\end{equation}
\end{lemma}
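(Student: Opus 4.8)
\emph{Proof plan.} The ``Consequently'' part follows at once from the first inequality: if $|p(\zeta)|\ge\|p\|_J$ then $\log(\|p\|_J/|p(\zeta)|)\le0$, so the bracket is $\ge n\sin\theta/d$, and the passage to $\sin^2\alpha$ uses only $\sin\theta=\sin(2\theta)/(2\cos\theta)\ge\tfrac12\sin(2\theta)=\tfrac1{16}\sin\alpha$ from \eqref{theta}, whence $\sin^2\theta/7.5>\sin^2\alpha/2000$. So I would concentrate on the first displayed inequality. Normalise so that $V=0$, $[V,W]$ lies on the positive real axis, $K\subset\{\Im z\ge0\}$, and write $\frac{p'}{p}(\zeta)=\sum_{j=1}^n(\zeta-z_j)^{-1}$. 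Every zero lies in the closed upper half-plane and $\zeta\in\RR$, so each $\zeta-z_j$ lies in the closed lower half-plane; hence $\Im\frac{p'}{p}(\zeta)=\sum_j\frac{-\sin\arg(\zeta-z_j)}{|\zeta-z_j|}\ge0$ is a sum of \emph{nonnegative} terms and $|\frac{p'}{p}(\zeta)|\ge\Im\frac{p'}{p}(\zeta)$. Calling a zero \emph{transversal} when $\arg(\zeta-z_j)\in[-\pi+\theta,-\theta]$, each transversal zero contributes at least $\sin\theta/d$, so $|\frac{p'}{p}(\zeta)|\ge(\#\text{transversal})\cdot\sin\theta/d$. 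The non-transversal zeros lie in two thin cones with apex $\zeta$ and half-aperture $\theta$ — one grazing along $[V,\zeta]$, one along $[\zeta,W]$ — and it is exactly these, which may cancel in $\frac{p'}{p}(\zeta)$, that cause all the difficulty.

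The positions of $T,D$ are governed by the sine theorem used in the footnote: $|VD|/|V\zeta|=\cos2\theta/\cos(\alpha-2\theta)\le8$, $|D\zeta|=|V\zeta|\sin\alpha/\cos(\alpha-2\theta)$, $|T\zeta|=|V\zeta|\sin\alpha/\cos(\alpha-3\theta)$, so $|T\zeta|\le|D\zeta|\le8|V\zeta|$ with $|T\zeta|\asymp|D\zeta|$; the triangle $\mathcal T:=\triangle(\zeta,T,D)$ is a narrow cone of aperture $\theta$ with axis nearly perpendicular to $[V,W]$, lying in the transversal region, disjoint from the two grazing cones, and with $\sup_{z\in\mathcal T}|\zeta-z|=|D\zeta|$. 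To bring $J$ into play I would invoke Chebyshev's theorem in Fekete's form (Lemma~\ref{l:transfinitediam}) together with P\'olya's bound (Lemma~\ref{l:transfinitediammeasure}): with $c$ the leading coefficient of $p$,
\[
\|p\|_J=|c|\sup_{w\in J}\prod_j|w-z_j|\ \ge\ |c|\,\Delta(J)^n\ \ge\ |c|\,(|J|/4)^n\ \ge\ |c|\,(\omega|D\zeta|/4)^n,
\]
while $|p(\zeta)|=|c|\prod_j|\zeta-z_j|$, so $\log(\|p\|_J/|p(\zeta)|)\ge-\sum_j\log\frac{4|\zeta-z_j|}{\omega|D\zeta|}$. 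For zeros with $|\zeta-z_j|\le|D\zeta|$ each term is $\ge\log(\omega/4)\ge-\log(8/\omega)$ (as $\omega<8/e<4$). The delicate zeros are those with $|\zeta-z_j|>|D\zeta|$: here one uses that a zero cannot be both grazing toward $V$ and far from $\zeta$ (that cone exits $K$ at $V$), and for the rest compares $|\zeta-z_j|$ with $\dist(z_j,[T,D])$ — they are comparable once $z_j$ leaves a bounded dilate of $\mathcal T$ — to see that replacing $\prod|\zeta-z_j|$ by $\sup_{w\in J}\prod|w-z_j|$ costs only a factor absorbable into $\log(8/\omega)$. This turns the defect $-\log(\|p\|_J/|p(\zeta)|)$ into a lower bound for $|\frac{p'}{p}(\zeta)|$ of the shape $c_\alpha\,(|T\zeta|\log(8/\omega))^{-1}\,(-\log\tfrac{\|p\|_J}{|p(\zeta)|})$.

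It then remains to glue together three ingredients: the transversal-zeros bound above; the coherence of the $\mathcal T$-zeros (they all point in one narrow direction, so $\big|\sum_{z_j\in\mathcal T}(\zeta-z_j)^{-1}\big|\le2\,\Im\frac{p'}{p}(\zeta)\le2|\frac{p'}{p}(\zeta)|$ and $\#(\text{zeros in }\mathcal T)\le2|D\zeta|\,|\frac{p'}{p}(\zeta)|$); and the Chebyshev--P\'olya estimate for $\|p\|_J$. Using $|T\zeta|\asymp|D\zeta|$, $\log(8/\omega)>1$ and $n=\#\text{transversal}+\#\text{grazing}$, a short juggling of these bounds yields the asserted inequality: in the ``many transversal zeros'' regime the bracket $n\sin\theta/d-\tfrac2{|T\zeta|}\log\tfrac{\|p\|_J}{|p(\zeta)|}$ is dominated by the transversal contribution, and otherwise the grazing zeros (at most $n$ in number) feed both the $J$-estimate and the defect term.

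I expect this last assembly to be the main obstacle — interlocking the three estimates so that \emph{both} the leading term $n\sin^2\theta/d$ and the correction $\tfrac2{|T\zeta|}\log\tfrac{\|p\|_J}{|p(\zeta)|}$ emerge with the stated explicit constants $7.5$ (hence $2000$), uniformly in the acute angle $\alpha$, in $\omega\in(0,8/e)$, and in the position of $\zeta$ on $(V,W)$. In particular, making the contribution of the zeros far from $\zeta$ genuinely absorbable — rather than producing a spurious $n/|T\zeta|$ term — hinges on the fine geometric comparison of $|\zeta-z_j|$ with $\dist(z_j,[T,D])$ indicated above, and this is precisely the point where ``essential changes'' over the maximum-norm argument of \cite{Rev2} have to be made.
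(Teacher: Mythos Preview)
Your setup (the sine-rule geometry of $T,D$, the transversal/grazing dichotomy, the ``Consequently'' passage via $\sin\theta\ge\tfrac1{16}\sin\alpha$) is fine, but there is a genuine gap in the main argument. The issue is your treatment of the \emph{grazing} zeros. You lump the close ones together with the close transversal zeros and feed them into the global Chebyshev--P\'olya bound $\|p\|_J\ge|c|(\omega b/4)^n$; this assigns every close zero a \emph{loss} of at most $\log(8/\omega)$ in $\log(\|p\|_J/|p(\zeta)|)$. But the whole point of the tilted segment $[T,D]$ is that moving from $\zeta$ to any $\xi\in J$ \emph{increases} the distance to every grazing zero: for $z_j$ with $\arg z_j\in[0,\theta]\cup[\pi-\theta,\pi]$ one has, via the cosine rule, the pointwise bound $|z_j-\xi|^2/|z_j|^2\ge 1+2a\sin\theta/d$ with $a=|T\zeta|$, hence a \emph{gain} of $a\sin\theta/(2d)$ per grazing zero. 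Summed over all grazing zeros this is the source of the term $n\cdot a\sin\theta/(2d)$ in $\log(\|p\|_J/|p(\zeta)|)$, and after rearranging it is exactly the $n\sin\theta/d$ inside the bracket. Your three ingredients never produce a positive-$n$ contribution to $\log(\|p\|_J/|p(\zeta)|)$, so they cannot be ``juggled'' into the stated inequality.

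Concretely: place $n/2$ zeros at $V$ and $n/2$ zeros at the real point $|V\zeta|$ on $[\zeta,W]$ (assume $\alpha$ small so both lie outside $D_{2b}(\zeta)$). Then all zeros are grazing, $p'/p(\zeta)=0$ by symmetry, your transversal count is zero, your $\mathcal T$-count is zero, and your Chebyshev/far-zero estimate gives only $\log(\|p\|_J/|p(\zeta)|)\ge -Cn$ for some $C>0$; plugging into the inequality to be proved forces $0\ge c\,n$, which is false. In reality the grazing-zero gain gives $\log(\|p\|_J/|p(\zeta)|)\ge n\,a\sin\theta/(2d)$, which makes the bracket $\le 0$ and the inequality trivially true. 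The paper therefore uses a \emph{four-way} partition $\Z_1=\Z[0,\theta]$, $\Z_4=\Z[\pi-\theta,\pi]$ (grazing, pointwise \emph{gain}), $\Z_3=\Z(\theta,\pi-\theta)\setminus D_{2b}(\zeta)$ (far transversal, pointwise loss $\le 2b/r_j$ summed to $\le(2b/\sin\theta)\sum\sin\varphi_j/r_j$), and $\Z_2=\Z(\theta,\pi-\theta)\cap D_{2b}(\zeta)$ (close transversal, Chebyshev loss $\le\log(8/\omega)$ per zero, with $\#\Z_2$ itself controlled by $\sum\sin\varphi_j/r_j$). Only $\Z_2$ meets Chebyshev; the grazing classes $\Z_1,\Z_4$ supply the positive $n$-term, and the transversal losses are absorbed into $\sum_j\sin\varphi_j/r_j=\Im(-p'/p(\zeta))\le|p'/p(\zeta)|$.
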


\begin{proof}
{\bf 1.} Without loss of generality we suppose that $[V, W] \subset \RR,$  and $\zeta=0$.
Applying the sine theorem, we obtain
\begin{equation*}
b:=|D\zeta|=\frac{\sin \alpha}{\sin(\pi/2-\alpha+2\theta)}|V\zeta|=
\frac{\sin \alpha}{\cos(\alpha-2\theta)} |V\zeta|\le  \frac{\sin \alpha}{\sin(2\theta)} |V\zeta| =8 |V\zeta|,
\end{equation*}
\begin{equation}\label{T}
a:=|T\zeta|=\frac{\sin \alpha}{\sin(\pi/2-\alpha+3\theta)}|V\zeta|=
\frac{\sin \alpha}{\cos(\alpha-3\theta)}|V\zeta| \ge  |V\zeta|\sin \alpha.
\end{equation}
Clearly, $a<b$ and for all $\xi\in[T,D]$ we have $a \le |\xi|\le b.$

Recall that we have denoted the zeroes of $p \in \PK$ as  $\Z=\Z(p)$ and we have written
$\Z=\{z_j=r_je^{i\ff_j}, \ j=1,\ldots,n\}.$
Further, let us denote
$$
S(\ff,\psi):=\{z\in \CC~:~ \ff < \arg z < \psi\}\qquad \text{\rm and} \qquad  \Z(\ff,\psi):=\Z \cap S(\ff,\psi),
$$
and similarly for closed, or half-open, half-closed etc. intervals for the arguments.
We split the set $\Z$ into the following parts.
\begin{align*}
\Z_1:&= \Z[0,\theta]\,,
&& n_1:=\#\Z_1 
, \notag \\
\Z_{2}:&= \Z(\theta,\pi-\theta) \cap D_{2b}(0)\,,
&&n_2:=\#\Z_{2}, \notag \\
\Z_{3}:&= \Z(\theta,\pi-\theta) \setminus D_{2b}(0)\,,
 && n_3:=\#\Z_{3}, \notag \\
\Z_4:&= \Z[\pi-\theta,\pi]\,,
&& n_4:=\#\Z_4 
\notag ~.
  \end{align*}
The partitioning of the set $\Z$ and, as a consequence, of the polygonal domain  $K$ is depicted in
Fig.~\ref{fig:1}.
\begin{figure}[h]
    \centering
   \includegraphics[width=420pt, keepaspectratio]{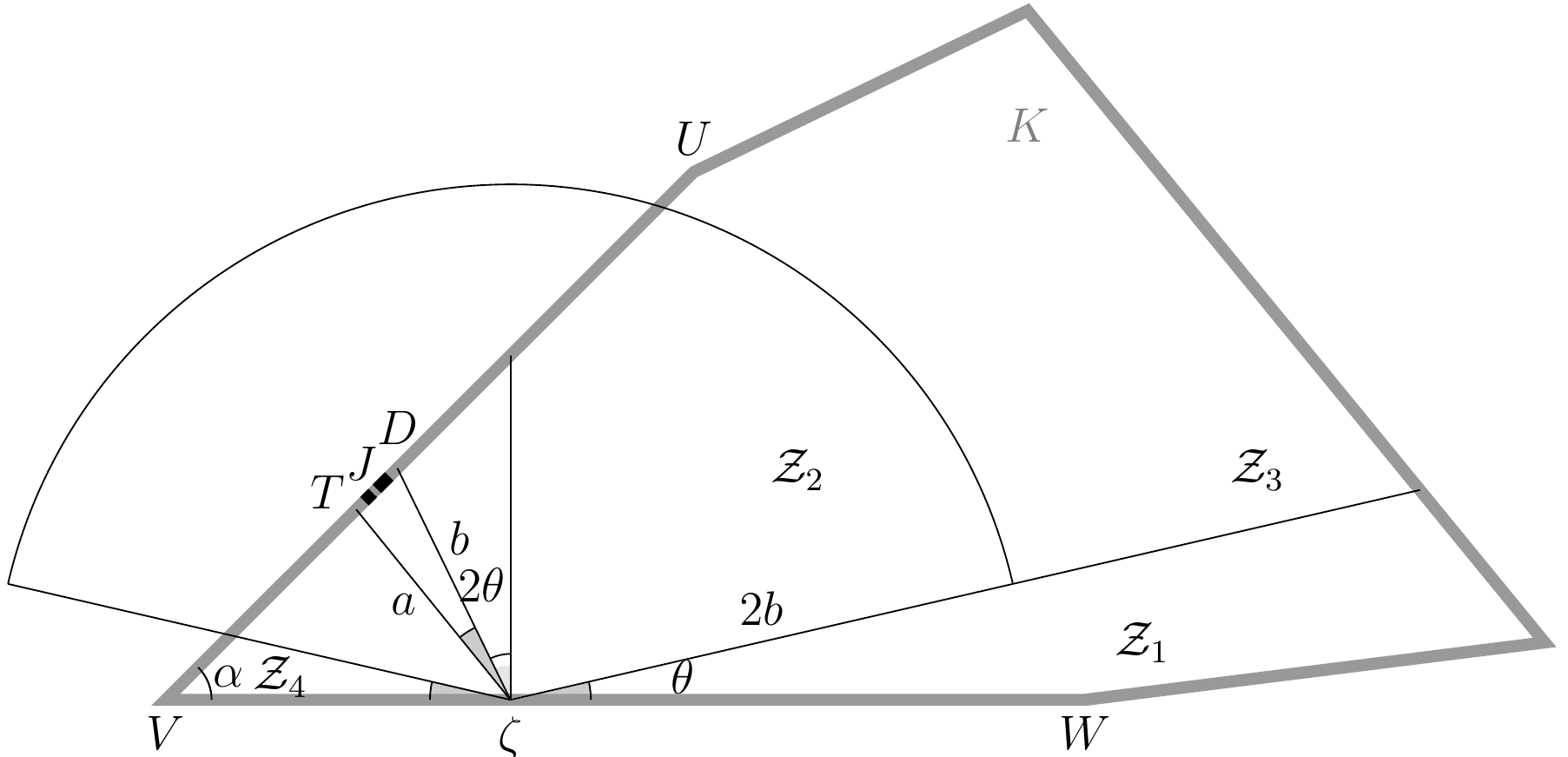}
    \caption{Partition of the set $\Z$}
    \label{fig:1}
\end{figure}

In the following, we estimate $\left|\dfrac{p(\xi)}{p(0)}\right|$ for any $\xi\in J$ from below.
It will be convenient to use the notation
$$\xi=\rho e^{i(\pi/2+\phi)}, \quad 2\theta \le \phi \le 3\theta, \quad  a\le \rho\le b.$$

{\bf 2.} Let us start with the estimate of the distance of any $z=re^{i\varphi} \in \Z_1$ from $J$.
By the cosine theorem, we obtain  the estimate
\begin{equation*}
\begin{aligned}
\left|\dfrac{z-\xi}{z}\right|^2 &=
\dfrac{r^2+\rho^2-2r\rho\cos(\pi/2+\phi-\varphi)}{r^2} \\& \ge 1+\dfrac{2\rho}{r} \sin(\phi-\varphi)
\ge 1+\dfrac{2\rho}{r} \sin \theta
\ge 1+\dfrac{2 a}{d} \sin \theta,
\end{aligned}
\end{equation*}
in view of $r\le d$.
As $\dfrac{2a}{d}\sin \theta \le \dfrac{2a \sin \alpha}{8d}\le \dfrac{a}{4d}\le \dfrac{1}{2}$,  we can use
\begin{equation*}
1+x >  e^{x/2} \quad \mbox{for} \quad 0<x\le 1/2
\end{equation*}
to get
$$
\left|\dfrac{z-\xi}{z}\right|^2 \ge \exp \left(\dfrac{a\sin \theta}{d}\right).
$$
Applying this estimate for all the $n_1$ zeroes $z_j\in \Z_1$ , we finally come to the inequality
\begin{equation}\label{Z4contri}
\prod_{z_j\in\Z_1}\left|\frac{z_j-\xi}{z_j}\right| \ge
 \exp
\left(n_1 \dfrac{a\sin \theta}{2d}\right) ~\qquad
\left(\xi  \in J \right).
\end{equation}

{\bf 3.}
Now we estimate the distance of any $z=re^{i\varphi} \in \Z_4$ from $J$. Let $X$ be the intersection of the side $UV$ and the ray emanating from $\zeta$ with an angle $\pi-\theta$ from $\overrightarrow{VW}$. Then in the triangle $\triangle(V\zeta X)$ the sides compare as
$$
\dfrac{|\zeta X|}{|\zeta V|}=\dfrac{\sin \alpha}{\sin(\alpha+\theta)}= \dfrac{\sin \alpha}{\sin \alpha\cos\theta+\cos\alpha\sin\theta}\le \dfrac{1}{\cos\theta},
$$
hence by convexity any point is at most as far from $\zeta$ as $\max(|V\zeta|,|\zeta X|) \le |V\zeta| /\cos \theta$. As $\Z_4 \subset \triangle(V \zeta X)$, we find $r \le |V\zeta|/\cos\theta$.
By the cosine theorem again, and then using $\cos(\varphi-\pi/2- \phi) \le  \sin(4\theta)$ we get
$$
\left|\dfrac{z-\xi}{z}\right|^2=  \dfrac{r^2+\rho^2-2 \rho r\cos(\varphi-\pi/2- \phi)}{r^2}\ge  1+ \dfrac{\rho}{r}\left(\dfrac{\rho}{r}-2\sin(4\theta)\right).
$$
To estimate the last quantity we use the following observations.
First, $\dfrac{\rho}{r}\ge \dfrac{a }{d }$. Second, by $r \le |V\zeta|/\cos \theta$ and \eqref{T} we find $\dfrac{\rho}{r}\ge \dfrac{a }{|V\zeta|/\cos \theta }\ge \sin \alpha \cos \theta$.
Third, \eqref{theta} implies
\begin{align*}
\sin \alpha \cos \theta-2\sin(4\theta) & = 8\sin(2\theta) \cos\theta -4\sin(2\theta)\cos(2\theta)
\ge  4\sin(2\theta) \cos(2\theta).
\end{align*}
Hence, $\cos \theta \cos(2\theta) \ge \cos^2(2\theta) = \frac12 (1+\cos(4\theta)) \ge 1/2$ yields
\begin{align*}
\left|\dfrac{z-\xi}{z}\right|^2  & \ge 1+ \dfrac{a}{d } \left(\sin \alpha \cos\theta-2\sin(4\theta)\right)
\\
&\ge 1+ \dfrac{a 8 \sin \theta \cos\theta \cos(2\theta)}{d}\ge
1+ \dfrac{4a\sin\theta }{d}.
\end{align*}
The same reasoning  as in the previous case gives \begin{equation}\label{Z1zeros}
\prod_{z_j\in\Z_4}\left|\frac{z_j-\xi}{z_j}\right| \ge
\exp \left(n_4\dfrac{a\sin\theta }{d}\right)\ge
\exp \left(n_4\dfrac{a\sin\theta }{2d}\right)~\qquad
\left(\xi \in J \right).
\end{equation}

{\bf 4.}  Next we estimate the contribution of zero factors belonging to $\Z_{3}$,
i.e. the ``far" zeroes
$z=re^{i\varphi}$ for which we have $r\ge 2 b $  and $\varphi\in (\theta,\pi-\theta).$
Applying the estimate $1-x\ge e^{-2x}$ ($0<x<1/2$)  we see that
   \begin{equation*}
\begin{aligned}
\left|\dfrac{z-\xi}{z}\right|^2 & =
\left|1-\dfrac{\xi}{z}\right|^2\ge \left(1-\dfrac{\rho}{r}
\right)^2
\ge \left(1-\dfrac{b}{r}\right)^2 \ge \exp\left(-4\dfrac{b}{r}\right).
\end{aligned}
\end{equation*}
This yields
\begin{equation}\label{Z3plusfin}
\prod_{z_j\in\Z_{3}}\left|\frac{z_j-\xi}{z_j}\right| \ge
\exp \left(-2b\sum_{z_j\in \Z_3} \dfrac{1}{r_j} \right)\ge
 \exp\left( - \dfrac{2b}{\sin \theta}\sum_{z_j\in \Z_3}\dfrac{\sin\varphi_j}{r_j}\right) \qquad (\xi \in J ).
 \end{equation}

{\bf 5.} Finally we consider the contribution of the zeroes from $\Z_{2}$.
By $ \Delta(J) \ge  |J|/4$ (in view of  Lemma \ref{l:transfinitediam}, and  Lemma~\ref{l:transfinitediammeasure}),  $\dfrac{2b}{r_j}\dfrac{\sin \varphi_j}{\sin \theta }
\ge 1$ for $z_j=r_je^{i\varphi_j}\in \Z_2$,  and \eqref{J},
we find
\begin{equation}\label{Z2}
\begin{aligned}
\max_{\xi\in J} \prod_{z_j\in\Z_{2}}\left|\frac{z_j-\xi}{z_j}\right| &\ge
\left(\Delta(J)\right)^{n_2} \prod_{z_j\in\Z_{2}}
\frac{1}{2b} \ge \left(\frac{|J|}{8 b}\right)^{n_2}=
\exp\left(-n_2 \log (8/\omega) \right)\\
&\ge \exp\left(- \log (8/\omega)\dfrac{2b}{\sin \theta} \sum_{z_j\in \Z_2}\dfrac{\sin\varphi_j}{r_j}\right).
\end{aligned}
\end{equation}

{\bf 6.}
If we collect the estimates \eqref{Z4contri}, \eqref{Z1zeros},
 \eqref{Z3plusfin} and \eqref{Z2} we obtain
for a certain point of maxima $\xi_0\in J$ in \eqref{Z2} the
inequality
\begin{align*}
\frac{\|p\|_J}{|p(0)|} & = \prod_{z_j\in\Z} \left|\frac{z_j-\xi_0}{z_j}\right|
\\
& \ge \exp\left\{ (n_1+n_4) \dfrac{a\sin\theta}{2d}
- \dfrac{2b \log (8/\omega) }{\sin \theta}
\sum_{z_j\in \Z_2}\dfrac{\sin\varphi_j}{r_j}-
\dfrac{2b}{\sin \theta} \sum_{z_j\in \Z_3}\dfrac{\sin\varphi_j}{r_j} \right\}
\\
& \ge \exp\left\{ n \dfrac{a\sin\theta}{2d}
- \left(\dfrac{2b\max(1, \log (8/\omega))}{\sin \theta}+\dfrac{a}{2}\right)
\sum_{z_j\in \Z_2 \cup \Z_3}\dfrac{\sin\varphi_j}{r_j}\right\}
\\
& \ge \exp\left\{ n \dfrac{a\sin\theta}{2d}
- \left(\dfrac{2b\max(1, \log (8/\omega))}{\sin \theta}+\dfrac{a}{2}\right)
\sum_{z_j\in \Z}\dfrac{\sin\varphi_j}{r_j}\right\},
\end{align*}
taking into account that $n_1+n_2+n_3+n_4=n$ and that for each zeroes in $\Z_2\cup\Z_3$ we have $\dfrac{\sin \varphi_j}{r_j} \ge \dfrac{\sin \theta}{d}$.

After taking logarithms, using $a\le b$ and the assumption $\omega \le 8/e$ we arrive at
$$
\log \frac{\|p\|_J}{|p(0)|} \ge  n \frac{a\sin\theta}{2d} - \dfrac{2.5 b \log (8/\omega) }{\sin \theta}\left| \frac{p'}{p}(0)\right| ~,
$$
that is, writing in again the normalization $\zeta:=0$,
\begin{equation*}
\begin{aligned}
\left| \frac{p'}{p}(\ze)\right| & > \dfrac{a\sin \theta}{5 b \log (8/\omega) }\left( n \frac{\sin\theta}{d} -\frac{2}{a} \log \frac{\|p\|_J}{|p(0)|} \right).
\end{aligned}
\end{equation*}

It remains to prove that $a/b\ge 2/3$, this clearly furnishing the statement.
Indeed, let
$$f(\alpha):=\dfrac{a}{b}=\dfrac{\cos(\alpha-2\theta(\alpha))}{\cos(\alpha-3\theta(\alpha))}.
$$
Straightforward calculations give
$$\begin{aligned}
\cos^2(\alpha-3\theta)f'(\alpha)=&-\sin(\alpha-2\theta)\cos(\alpha-3\theta)(1-2\theta')\\ \phantom{=}&+\cos(\alpha-2\theta)\sin(\alpha-3\theta)(1-3\theta')
\\=&
\sin(-\theta)(1-2\theta')-\cos(\alpha-2\theta)\sin(\alpha-3\theta)\theta'<0.
\end{aligned}
$$
This clearly entails, in view of $\sin ((3/2)x)\le (3/2)\sin x$, $x\in[0, \pi/2]$, that it holds
$$
\dfrac{a}{b}\ge f(\pi/2)=
\dfrac{\cos(\pi/2-2\theta(\pi/2))}{\cos(\pi/2-3\theta(\pi/2))}=
\dfrac{\sin(\arcsin(1/8))}{\sin((3/2)\arcsin(1/8))} \ge  \dfrac23.
$$
\end{proof}

\subsection{Proof of Theorem~\ref{th:acute-alpha}}
Now we use a different positioning of our $K$ in the complex plane, namely without loss of generality we suppose that
$[V, W] \subset \RR$ and $V=0$, $W>0.$

{Take any $p\in \PK.$ First, if $(I_{+}\cup I_{-}) \subset \Gamma \setminus \HH $, then the left hand side of \eqref{acutepprimep} vanishes and there remains nothing to prove.
So consider the case when $(I_{+}\cup I_{-})$ has points from $\HH$. In view of Lemma~\ref{l:oneside} we may assume that
inequality~\eqref{onesidecn} is valid on $[0, e^{i\alpha} R_V]$ (which corresponds to $I_{+}$ from  \eqref{newIplusminusF}), containing $J:=[0, 8re^{i\alpha}]$.} That is, we have
\begin{equation}\label{onesidecn_s}
|p'(z)| \ge \frac{\sin \alpha}{8d} n |p(z)| \qquad (z \in J:=[0, 8 r e^{i\alpha}].
\end{equation}
Hence in particular we also have
\begin{equation}\label{oneside_int_cn_s}
\int_{J} |p'|^q  \ge \left(\frac{\sin \alpha}{8d}\right)^q n^q \int_{J} |p|^q .
\end{equation}

Now we see to the estimation of $|p'(s)|$ on $I:=[0,r] \subset [0,R_V/8]$, which  corresponds to $I_{-}$ from \eqref{newIplusminusF}.
First of all we introduce a number $\eta$ such that
\begin{equation}\label{etaint}
\int_{[0,\eta]}|p|^q=\dfrac 12 \int_{I}|p|^q,
\end{equation}
the parameter
$\omega_\alpha:=2^{-5} \sin \alpha$,
and the set
\begin{equation}\label{Sdef}
\begin{gathered}
 S=\{x\in [\eta,r] \ \colon \ |p'(x)|\le \kappa n|p(x)| \}, \qquad\textrm{with}
\\ \kappa:=\kappa(\alpha):=C(\alpha,\omega_\alpha)=\dfrac{\sin^2 \alpha}{2000 \,d\log (8/\omega_\alpha)  }=\dfrac{\sin^2 \alpha}{2000 \,d \log (2^8/\sin \alpha)  },
\end{gathered}
\end{equation}
where $C(\alpha, \omega)$ is the quantity from \eqref{Tiltedestimate}.
By definition of $S$ we immediately have
\begin{equation}\label{complSint}
  \int_{[\eta,r]\setminus S}|p'|^q\ge \kappa^q n^q \int_{[\eta,r]\setminus S}|p|^q .
\end{equation}

 It remains to majorize  $\Ds\int_S|p|^q.$
 This will be pursued not by seeking a comparison with $\Ds\int_{S} |p'|^q$, but with comparing to $ \Ds \int_{J} |p|^q$. When succeeding, we will be able to use that on the whole $J$ we already have the strong majorization \eqref{onesidecn_s} and \eqref{oneside_int_cn_s} of $|p(z)|$ by $|p'(z)|$.

 As $x$ is real, and $|p(x)|^2$ and $|p'(x)|^2$  are polynomials, the set $S$ consists of a finite number of segments, some of which may be degenerate.
 We dissect each segment of $S$ to essentially disjoint segments with  lengths not exceding $\eta$. Thus  we get a representation
 $$
S=\bigcup_{\ell=1}^{m} I_\ell,
\qquad  I_\ell:=[u_{\ell}, v_\ell], \qquad |I_\ell|=v_\ell-u_{\ell} \le \eta, \quad \ell=1,\ldots,m,
$$
$$u_1\le v_1 \le u_2\le  \ldots \le u_m\le v_m.
$$
For a point $x\in S$ we introduce the notations, corresponding to that of Lemma \ref{l:TNE}:
$$
D(x):=J \cap \{x+te^{i(\pi/2+2\theta)} t~:~t>0\}, \qquad
T(x):= J \cap \{ x+te^{i(\pi/2+3\theta)} t~:~t>0\},
$$
$$
b(x):=|D(x)-x|= \frac{\sin \alpha}{\sin(\pi/2-\alpha+2\theta)} x = \frac{\sin \alpha}{\cos(\alpha-2\theta)} x,
$$
$$
h(x):= |T(x)-D(x)|=
\frac{\sin \theta}{\cos(\alpha-3\theta)} b(x) =\frac{\sin \alpha \sin \theta}{\cos(\alpha-2\theta)\cos(\alpha-3\theta)} x \ge  2^{-4} (\sin\alpha)^2x .
$$
(Recall that the given rays emanating from $x \in S$ intersect $J$, not only its straight line, as was said in Footnote 8 when formulating Lemma \ref{l:TNE}.)

 Let $s_\ell=\arg \max \{|p(x)| \colon x\in I_\ell \}.$
 We define the points $D_\ell:=D(s_\ell)$, $T_\ell:=T(s_\ell)$
 and also introduce $b_\ell:=b(s_\ell)$, $h_\ell:=h(s_\ell)$,
and two subsets of the segment $[T_\ell,D_\ell]\subset J$
 $$J_\ell:=\{z\in [T_\ell,D_\ell] \colon |p(z)|<|p(s_\ell)|\}, \quad J_\ell^c=[T_\ell,D_\ell]\setminus J_\ell.$$
 We claim that $|J_\ell|< h_\ell/2$ (and hence $|J_\ell^c| \ge  h_\ell/2$).
 Indeed, if $|J_\ell| \ge  h_\ell/2$ then we would have by an application of Lemma \ref{l:TNE} with the set $J_\ell$ and the parameter
 $$\omega:=\dfrac{|J_\ell|}{b_\ell}\ge \dfrac{h_\ell}{2b_\ell}=
 \frac{\sin \theta}{2\cos(\alpha-3\theta)} \ge \dfrac{\sin \theta}{2} > \dfrac{\sin 2\theta}{4}  = 2^{-5}\sin \alpha =\omega_\alpha $$ the inequality
 $\kappa n|p(s_\ell)| = C(\alpha,\omega_\alpha) n|p(s_\ell)| <C(\alpha,\omega) n|p(s_\ell)| <  |p'(s_\ell)|$,
 which contradicts to \eqref{Sdef}.

Our goal is to construct  sets $Q_{\ell}\subset  J_\ell^c$  of linear Jordan measure $|Q_\ell| =  2^{-6}(\sin \alpha)^2 |I_\ell|$.  A~further criterion for $Q_\ell$ is that it must be disjoint from all previous $Q_i$s. This can be ascertained because
$$
\left|\bigcup_{i=1}^{\ell-1} Q_i\right| =\sum_{i=1}^{\ell-1} |Q_i| =
2^{-6}(\sin \alpha)^2 \sum_{i=1}^{\ell-1} |I_i| \le 2^{-6}(\sin \alpha)^2 v_{\ell-1} \le
2^{-6}(\sin \alpha)^2 s_\ell \le h_\ell/4,
$$ while
\begin{align*}
\left|J_\ell^c\setminus  \bigcup_{i=1}^{\ell-1} Q_i\right| &\ge
|J_\ell^c|-\sum_{i=1}^{\ell-1} |Q_i| \ge
 h_\ell/4 \ge 2^{-6}(\sin \alpha)^2 s_\ell \ge 2^{-6}(\sin \alpha)^2 \eta \ge 2^{-6}(\sin \alpha)^2 |I_\ell|.
\end{align*}
Therefore, --taking into account that for all points $z\in Q_\ell \subset J_\ell^c$ we have by construction $|p(s_\ell)| \le |p(z)|$--  we are led to
$$
\int_{I_\ell} |p|^q   \le |p(s_\ell)||I_\ell| \le
2^6(\sin\alpha)^{-2} \int_{Q_\ell} |p|^q , \quad \ell=1,\ldots,m,
$$
and
\begin{equation}\label{Sint}
\int_S |p|^q\le 2^6(\sin\alpha)^{-2}\int_J |p|^q
\end{equation}

Finally, collecting  \eqref{onesidecn_s},  \eqref{etaint}, \eqref{complSint}, and \eqref{Sint},  we are led to
$$
\begin{aligned}
\int_{I\cup J}|p|^q&=2\int_{[\eta,r]\setminus S}|p|^q+2\int_{S}|p|^q+\int_{J}|p|^q
\le  2\int_{[\eta,r]\setminus S}|p|^q+(1+2^7(\sin\alpha)^{-2})\int_{J}|p|^q
\\&\le \left\{2 \kappa^{-q} + (1+2^7(\sin\alpha)^{-2}) \left( \frac{8d}{\sin \alpha }\right)^q\right\} n^{-q}  \int_{I\cup J}|p'|^q.
\end{aligned}
$$

\section{Proof of Theorem~\ref{th:polygon}}

Having Theorem \ref{th:acute-alpha} proven, finally in this section we derive the main result of the paper.

Observe that if $K$ is a convex polygon, and
$$
\delta_0>0
$$
is chosen to be a small enough constant (depending on $K$, of course), then we can fix some other (small) positive constant parameter
$$
h_0>0
$$
such that a point $\ze \in \DK$ is either $\delta_0$-close to some vertex $V$ where $K$ has an acute angle (and $\ze$ lies on one side of the polygon ending in the vertex $V$), or $\ze$ has a local depth $h(\ze,K)\ge h_0$.

This can be made explicit in dependence of a few further geometric parameters of the polygonal convex domain $K$. Indeed, consider any side $[A,B]$ of $K$, any point $Z\in [A,B]$ on this side, and denote the angles at $A$ and $B$ by $\alpha$ and $\beta$, respectively. Let us draw the normal chords $[A,A']$ and $[B,B']$ of $K$ at $A$ and $B$, which are perpendicular to $[A,B]$. There are three cases corresponding to the number of acute angles among $\alpha, \beta$ being 0, 1 or 2.

If both angles are at least $\pi/2$, then both normal chords are of positive length. Drawing the normal chord of $Z$, perpendicular to the side $[A,B]$ thus remains at least as long as the normal chord of the orthogonal trapezium $A'ABB' \subset K$. This is at least as long as the minimum of the distances $|AA'|$ and $|BB'|$.

If only one angle is an acute angle, say $\alpha$, then $|BB'|$ is still positive and the orthogonal triangle $\triangle(ABB')$ is included in $K$ by convexity. Therefore by simple similarity $h(Z,K) \ge \frac{|AZ|}{|AB|} |BB'|\ge \delta_0 \frac{|BB'|}{|AB|}$ if $|AZ|\ge \delta_0$.

Finally, assume that both angles $\alpha$ and $\beta$ are acute angles. Drawing the other side lines of $K$ with angles $\alpha$ and $\beta$ at $A$ resp. $B$ to $[A,B]$, we obtain an intersection point $C$ and a triangle $\triangle(ABC)$, which contains $K$. Given that $w:=w(K)>0$, there must exist a point $Q$ of $K\subset \triangle(ABC)$ at least of distance $w$ from the line of $[A,B]$. In view of $Q \in K \subset \triangle(ABC)$, the ray emanating from $Q$ perpendicularly to $[A,B]$ intersects $[A,B]$ in a point $M$ strictly in $(A,B)$. Note that by convexity $\triangle(ABQ)\subset K$.

Now if $Z$ is any point of $(A,B)$ -- say belonging to $(A,M]$ -- then the normal chord at $Z$ is parallel to $MQ$, and as $\triangle(ABQ)\subset K$, by similarity we find $h(Z,K)\ge \frac{|AZ|}{|AM|} |MQ| \ge |AZ| \frac{w}{|AB|} \ge \delta_0 \frac{w}{|AB|}$ if $|AZ|\ge \delta_0$.

Summing up, we find that in all cases $h(Z,K)\ge h_0$ unless $Z$ is $\delta_0$-close to some vertex with an acute angle of $K$ at this vertex.

\medskip
Now the full set $\Gamma:=\DK$ splits into two parts: the set of points $\Gamma_0$ which are $\delta_0$-close to some acute angle vertex, and the remaining set $\Gamma\setminus \Gamma_0$, with local depth of its points exceeding the preset positive value $h_0$.

Generally speaking, for the latter we can directly involve the local depth Theorem \ref{th:localdepth} to derive $|p'(\ze)| \gg n |p(\ze)|$ whenever $\ze \in \HH$ and $n\ge n_0$, with the implied constant depending only on the values of $d$ and $h_0$, while for $\Gamma_0$--so where $|\ze-V|<\de_0$ with some vertex $V$ with acute angle--we can invoke Theorem \ref{th:acute-alpha}.

Let us execute this program. To make our life easier, assume that the neighborhoods $D_V(8\delta_0)$ of acute angle vertices remain disjoint -- this is certainly achieved when $\delta_0\le \min\{R_V \ : \ V~\textrm{is an acute angle vertex of}~K \}$.

So we choose $\delta_0:=\frac18\min_V R_V$, pick the corresponding $h_0=h_0(\delta_0)>0$, and denote $c_0:=c_0(K,\delta_0):=\frac{h_0^4}{1500d^5}$.

Then we can write
\begin{equation}\label{Deeppart}
\int_{\DK} |p'|^q  \ge \int_{\Gamma\setminus \Gamma_0} |p'|^q  \ge c_0^q n^q \int_{\Gamma\setminus \Gamma_0} |p|^q.
\end{equation}

Next, let $V$ be any vertex with the corresponding acute angle $\alpha$. Then an application of
Theorem \ref{th:acute-alpha} with $r:=\delta_0$ furnishes the estimate
$$
\int_{\Gamma\cap D_{8\delta_0}(V)} |p'|^q \ge \mu(\alpha) n^q  \int_{\HH\cap D_{\delta_0}(V) } |p|^q.
$$
Summing this inequality over all vertices $V$ with acute angles, the sum of the parts on the left hand side does not exceed the full integral over $\Gamma$, because by construction for different acute angle vertices these neighborhoods are disjoint. (Otherwise, we could have referred to the fact that there are at most three acute angle vertices, yielding a constant 3 here.) On the right hand side, however, the full sum is just the integral over the totality of $\Gamma_0 \cap \HH$. So putting $\mu:=\min_K \mu(\alpha)$ (with the minimum understood over all vertices $V$ of $K$ with acute angles and $\alpha$ denoting the corresponding acute angles) yields
$$
\int_{\Gamma} |p'|^q \ge \mu n^q  \int_{\Gamma_0 \cap \HH} |p|^q.
$$
From here, taking into account \eqref{Deeppart}, we are led to
\begin{align*}
2 \int_{\DK} |p'|^q & \ge c_0^q n^q \int_{\Gamma\setminus \Gamma_0}  |p|^q + \mu n^q \int_{\HH \cap \Gamma_0} |p|^q
\\ & \ge \min(c_0^q, \mu) ~ n^q \int_{\HH} |p|^q \ge \frac12 \min(c_0^q, \mu) ~ n^q \int_{\Gamma} |p|^q .
\end{align*}
This concludes the proof of Theorem \ref{th:polygon} with the constant $c(K):=4^{-1/q} \min(\mu^{1/q}, c_0)$.

\section{Concluding remarks}

As is said above in Theorem \ref{th:sharpq}, similarly to the case of the maximum norm, also for the $L^q(\DK)$ norm any compact convex domain $K$ admits polynomials $p\in \PK$ with oscillation not exceeding $O(n)$. On the other hand we have shown for seve\-ral classes of convex domains--convex domains with fixed depth (including e.g. all smooth domains), all convex polygons, generalized Er\H od-type domains--that the order of oscillation indeed reaches $c_K n$.

A natural question--quite resembling to the question posed by Er\H od back in 1939 for the maximum norm case--is to identify those domains which indeed admit order $n$ oscillation even in $L^q(\DK)$ norm.

It has been clarified that, like in case of the maximum norm, also for $L^q$ norms the interval $\II$ behaves differently in the sense that there the order of oscillation may be as low as $\sqrt{n}$. Therefore, it is certainly necessary that \emph{some} conditions are assumed for an order $n$ oscillation. The great question is if apart from having a nonempty interior, is there need for any additional assumption? We think that probably not.

\begin{conjecture}
For all $0<q<\infty$ and for all compact convex domains $K\Subset \CC$ there exist $n_0:=n_0(q,K)$ and $c_K=C(K,q)>0$ such that for all $n\ge n_0$ and for any $p\in\PK$ we have $\|p'\|_{L^q(\DK)} \ge c_K n \|p\|_{L^q(\DK)}$.
\end{conjecture}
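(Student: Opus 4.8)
The plan is to localize on $\Gamma:=\DK$ and split it into a ``deep'' part and small neighbourhoods of the acute vertices. Since $K$ is a convex polygon, elementary similarity arguments (using $w(K)>0$ to bound the normal chord from below at points bounded away from acute vertices) yield constants $\delta_0=\delta_0(K)>0$ and $h_0=h_0(K)>0$ such that every $\zeta\in\Gamma$ either has local depth $h(\zeta,K)\ge h_0$, or lies on a side of $K$ ending at an acute vertex $V$ and within distance $\delta_0$ of $V$. Let $\Gamma_0$ be the union of the $\delta_0$-discs about the acute vertices. On $\Gamma\setminus\Gamma_0$ the local depth estimate of Theorem~\ref{th:localdepth} applies directly: for $n\ge n_0$ and $\zeta\in\HH(p)$ it gives $|p'(\zeta)|\ge c_0\,n\,|p(\zeta)|$ with $c_0=h_0^4/(1500\,d^5)$, hence $\int_{\Gamma\setminus\Gamma_0}|p'|^q\ge c_0^q\,n^q\int_{(\Gamma\setminus\Gamma_0)\cap\HH}|p|^q$. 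Throughout, Lemma~\ref{l:Hlogp} (a consequence of the Nikolskii bound of Lemma~\ref{l:Nikolskii}) lets us discard $\Gamma\setminus\HH$, since $\int_{\Gamma\cap\HH}|p|^q\ge\tfrac12\|p\|_{L^q(\Gamma)}^q$.

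The heart of the argument is a neighbourhood of an acute vertex $V$ with angle $\alpha<\pi/2$ and neighbouring vertices $U,W$; there a pointwise bound $|p'|\gg n|p|$ simply fails, because zeros accumulating along a side annihilate $p'$ there. I would instead prove a \emph{local integral} inequality $\mu(\alpha)\,n^q\int_{\HH\cap D_r(V)}|p|^q\le\int_{\Gamma\cap D_{8r}(V)}|p'|^q$ for all small $r$ (this is Theorem~\ref{th:acute-alpha}). Its proof has two steps. First, a dichotomy (Lemma~\ref{l:oneside}): if some point of $[V,U]\cup[V,W]$ lies in $\HH$, then a zero count shows $p$ has fewer than $n/2$ zeros in the homothetic copy $K'=\tfrac1{64}K$ (otherwise $|p|/\|p\|_K\le 2^{-n}$ on $K'$ by the transfinite-diameter lower bound $\|p\|_K\ge(d/4)^n$ coming from Lemma~\ref{l:transfinitediammeasure}, which would force these segments out of $\HH$), so at least $n/4$ zeros lie strictly on one side of the line through $V$ in the direction of $[V,W]$; then the Tur\'an sum $|p'/p|(z)\ge\sum_{z_j}\sin(\beta-\varphi_j)/r_j$ (with $\beta=\alpha/2$) gives $|p'(z)|\ge\frac{\sin(\alpha/2)}{4}\,n\,|p(z)|\ge\frac{\sin\alpha}{8d}\,n\,|p(z)|$ for every $z$ on the \emph{opposite} short side, call it $J$. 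So on $J$ the strong pointwise bound holds for free; it remains to transfer it to the ``bad'' side.

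The transfer is a ``tilted normal estimate'' (Lemma~\ref{l:TNE}). For a point $\zeta$ on the bad side, set $\theta:=\tfrac12\arcsin(\tfrac18\sin\alpha)$ (so $8\sin2\theta=\sin\alpha$) and consider the segment cut out on the $J$-side by two rays from $\zeta$ tilted off the inward normal by angles $2\theta$ and $3\theta$; the tilt is small enough that these rays actually land inside $J$, which is what forces $r\le R_V/8$. Splitting the zeros of $p$ into (i) those in a thin cone of half-angle $\theta$ about the chord direction at $\zeta$, treated via $\Delta(J)\ge|J|/4$; (ii) the zeros close to $V$, for which the cosine theorem yields $\bigl|\tfrac{z_j-\xi}{z_j}\bigr|^2\ge 1+c\,a\sin\theta/d$, hence growth proportional to their number; (iii) the ``far'' zeros beyond radius $2b$, handled by $1-x\ge e^{-2x}$; and (iv) the bulk, whose contribution is at most $\exp\!\bigl(-C\tfrac{b}{\sin\theta}\sum_j\sin\varphi_j/r_j\bigr)$; and using that $a/b$ is bounded below (in fact $\ge2/3$ by a short monotonicity computation), one arrives at $\log\frac{\|p\|_J}{|p(\zeta)|}\ge n\frac{a\sin\theta}{2d}-C\frac{b}{\sin\theta}\,|p'/p(\zeta)|$, which yields a clean lower bound $|p'/p(\zeta)|\ge C(\alpha)\,n$ whenever $|p(\zeta)|\ge\|p\|_J$. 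Feeding this into an exhaustion over the bad side --- on the ``bad set'' $S=\{x:|p'(x)|\le\kappa n|p(x)|\}$ one chops $S$ into short subintervals, matches the maximum of $|p|$ on each to a disjoint chunk of the transferred $J$-segment (which by the tilted estimate cannot be where $|p|$ stays small), and so bounds $\int_S|p|^q$ by a fixed multiple of $\int_J|p|^q$ --- gives $\int_{I\cup J}|p|^q\le C(\alpha,q)\,n^{-q}\int_{I\cup J}|p'|^q$, hence Theorem~\ref{th:acute-alpha}. Summing the local inequalities over the acute vertices (at most three, so their $\delta_0$-discs can be taken disjoint), adding the deep-part estimate, and applying Lemma~\ref{l:Hlogp} gives $2\int_\Gamma|p'|^q\ge\tfrac12\min(c_0^q,\mu)\,n^q\int_\Gamma|p|^q$, i.e.\ the claim with $c_K=4^{-1/q}\min(\mu^{1/q},c_0)$.

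The main obstacle is the tilted normal transfer. The tilt $\theta$ must be small enough for the rays from the bad side to reach inside the good side $J$, yet large enough that \emph{every} zero away from the vertex produces exponential growth of $|p(\xi)/p(\zeta)|$ of the correct order in $n$; and the zeros \emph{near} $V$ --- exactly the ones that could make $p'/p$ small on the bad side --- must be routed into a term proportional to their count with the right sign, not into the error term. Making all four zero-classes cooperate with explicit constants depending only on $\alpha$ (and $q$) is the delicate point; the rest is bookkeeping, together with the Nikolskii/$\HH$-reduction needed to handle the region where $p$ is small.
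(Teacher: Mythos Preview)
Your argument is, step for step, the paper's own proof of Theorem~\ref{th:polygon}, and as a proof of \emph{that} theorem it is correct: the same deep-part/acute-vertex decomposition, the same dichotomy Lemma~\ref{l:oneside}, the same tilted-normal Lemma~\ref{l:TNE}, the same exhaustion over the bad set $S$, and the same final constant $c_K=4^{-1/q}\min(\mu^{1/q},c_0)$. The trouble is that the statement you were asked to prove is Conjecture~\ref{conj:order}, which concerns \emph{all} compact convex domains $K\Subset\CC$. The paper does not prove this conjecture; it states it as an open problem in Section~\ref{sec:conclusion}, and the best general result available is the $cn/\log n$ bound of Theorem~\ref{th:nlogn}.

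The polygonal hypothesis enters at your very first sentence: ``Since $K$ is a convex polygon, elementary similarity arguments \dots\ yield constants $\delta_0,h_0>0$ such that every $\zeta\in\Gamma$ either has local depth $h(\zeta,K)\ge h_0$, or lies on a side of $K$ ending at an acute vertex $V$ and within distance $\delta_0$ of $V$.'' For a general convex domain this dichotomy simply fails. The boundary need have no vertices at all, and the local depth $h(\zeta,K)$ may tend to zero along a continuum of boundary points (a thin lens near its tips, or a domain whose boundary has a corner approached by a curved arc with vanishing curvature radius). There is then no finite collection of ``bad'' neighbourhoods to which Theorem~\ref{th:acute-alpha} could be applied, and the tilted-normal transfer of Lemma~\ref{l:TNE} --- which needs two \emph{straight} sides meeting at a vertex $V$ with a definite angle $\alpha$, so that the rays from the bad side land inside a segment $J$ on the good side --- has no obvious analogue when the boundary is curved. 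Your proof therefore establishes Theorem~\ref{th:polygon} but leaves Conjecture~\ref{conj:order} untouched.
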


We are not that far from this conjecture, c.f. Theorem \ref{th:nlogn}. Nevertheless, in analysis that mere $\log n$ can be the crux of the situation. Still, encouraged by the maximum norm case and by our results for the $L^q$ norm so far, we formulated an even more precise conjecture already in \cite[Section
7, Conjecture 2]{GR-1} (and repeated in \cite{GGR}, too).

\begin{conjecture}
There exists an absolute constant $c>0$ and an $n_0=n_0(q,w,d)$ such that for all $0<q<\infty$, for all compact convex domains $K\Subset \CC$, for all $n\ge n_0$ and for any $p\in\PK$ we have $\|p'\|_{L^q(\DK)} \ge c \dfrac{w}{d^2} n \|p\|_{L^q(\DK)}$.
\end{conjecture}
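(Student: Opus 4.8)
Toward Conjecture \ref{conj:wdsquare}, the natural plan is to retain the two-regime strategy of Section \ref{sec:mainproof} while forcing every constant to depend only on the scale-invariant ratio $d/w$ (and, for the strongest form, to be uniform in $q$ as well). Given a convex domain $K$ with $w:=w(K)$, $d:=d(K)$, one fixes a depth threshold $h_0 \asymp w$ and splits $\partial K$ into the \emph{deep} part $\Gamma_{\mathrm{deep}}:=\{\zeta\in\partial K: h(\zeta,K)\ge h_0\}$ and the \emph{thin} part $\Gamma_{\mathrm{thin}}:=\partial K\setminus\Gamma_{\mathrm{deep}}$. On $\Gamma_{\mathrm{deep}}\cap\HH$ the target is a pointwise bound of the sharp order $|p'(\zeta)| \gtrsim \tfrac{h(\zeta,K)}{d^2}\, n\, |p(\zeta)| \gtrsim \tfrac{w}{d^2}\, n\, |p(\zeta)|$; this is the scaling predicted by the $R$-circular case (a cap of sagitta $h$ and chord $\asymp d$ has $R \asymp d^2/h$, hence $n/(2R) \asymp nh/d^2$), and it is strictly stronger than Theorem \ref{th:localdepth}, whose factor is only $h^4/d^5$. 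Establishing this improved pointwise estimate on $\HH$ -- in effect an almost-everywhere, correctly scaled refinement of the Hal\'asz--R\'ev\'esz bound of Theorem \ref{th:convexdomain} -- is the first of the two substantial tasks; plain polygonal approximation does not circumvent it, since the constant $h_0^4/d^5$, and the threshold $n_0 \asymp d^4/h_0^4$, coming from Theorem \ref{th:localdepth} degenerate as the approximating polygon refines.

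The second, and I expect harder, task is the thin part. The obstruction is that $\Gamma_{\mathrm{thin}}$ need not be a pair of straight segments meeting at a genuine vertex, as it was in Theorem \ref{th:acute-alpha}: it may be a union of many, or even infinitely many short, pieces along which $\partial K$ is pinched between two convex arcs, with no honest corner and no single pair of tangent directions around which to organize the argument. One would cover $\Gamma_{\mathrm{thin}}$ by boundedly overlapping ``thin caps'', replace each cap by the wedge formed by the supporting lines at its two endpoints, and run a tilted-normal argument in the spirit of Lemma \ref{l:TNE}. Two new ingredients are needed: (i) a bound on the convexity error -- how far $\partial K$ departs from these supporting lines across one cap, expressed through that cap's depth -- so that the curvature contributions to the cosine-theorem estimates for $|z-\xi|/|z|$ stay of lower order and get absorbed, provided the caps are chosen short enough relative to their depth; and (ii) a packing argument guaranteeing that the ``good'' subsets $Q_\ell$ carrying the mass of $|p|$ can be chosen pairwise disjoint \emph{globally along $\Gamma_{\mathrm{thin}}$}, not merely within one corner as in the proof of Theorem \ref{th:acute-alpha}. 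Forcing the resulting constants to depend on $d/w$ alone -- rather than on a curvature bound or on a ``smallest opening angle'' -- is exactly where the argument must become genuinely more robust than the polygonal one, and where I expect the real difficulty to lie.

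Two further layers of bookkeeping sit on top of this. For $q$-uniformity one cannot afford the $q$-dependent losses of the present proof -- $\mu(\alpha)$ carries factors $\kappa^{-q}$ and $(d/\sin\alpha)^q$, and Lemma \ref{l:Hlogp} adds a further $2^{-1/q}$ -- so an \emph{absolute} constant $c$ seems to require running the pointwise estimates on a subset of $\HH$ of \emph{nearly full} boundary measure (a fraction tending to $1$ as $q\to 0^+$), which in turn forces a sharpening of the Nikolskii-type input behind Lemma \ref{l:Nikolskii} and Lemma \ref{l:Hlogp}. For the threshold, every ``small $n$'' step -- the condition $2^{-n}<\lambda n^{-2/q}$ used in Lemma \ref{l:oneside}, and the $n_0 \asymp d^4/h^4$ clause behind Theorem \ref{th:localdepth} -- has to be revisited so that $n_0$ depends only on $d/w$ and $q$, as the conjecture demands. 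Of these, the curved, corner-free thin part handled with $d/w$-only constants is the genuine bottleneck; the deep-part refinement of Theorem \ref{th:convexdomain} is plausible but technical, and the $q$- and $n_0$-uniformity should be comparatively routine once the two core pointwise estimates are available.
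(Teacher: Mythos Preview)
The statement you are addressing is a \emph{conjecture}, not a theorem: the paper offers no proof of Conjecture~\ref{conj:wdsquare}, so there is nothing to compare your proposal against. What you have written is not a proof either; it is a research outline that identifies the obstacles the paper itself leaves open.

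Your outline is sensible as a plan, and you correctly locate the two genuine bottlenecks: (a) replacing the pointwise factor $h^4/d^5$ of Theorem~\ref{th:localdepth} by the sharp $h/d^2$ on $\HH$, and (b) handling the thin part of $\partial K$ when it is not a pair of straight segments at an honest vertex but a possibly curved, corner-free arc, with constants depending only on $d/w$. But neither of these is carried out. In particular, you explicitly note that polygonal approximation does not work because the constants in Theorem~\ref{th:localdepth} and in $\mu(\alpha)$ degenerate, and you do not supply a replacement argument --- only a wish list (curvature-error control in the cosine-theorem step, a global disjoint-packing of the sets $Q_\ell$, a sharpened Nikolskii input for $q$-uniformity). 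Each of these is precisely the missing idea; naming them is not the same as providing them. As written, this is a fair description of why the conjecture is hard, not a proof of it.
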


\end{document}